\DeclareFontFamily{U}{rcjhbltx}{}
\DeclareFontShape{U}{rcjhbltx}{m}{n}{<->rcjhbltx}{}
\DeclareSymbolFont{hebrewletters}{U}{rcjhbltx}{m}{n}
\let\aleph\relax\let\beth\relax
\let\gimel\relax\let\daleth\relax
\DeclareMathSymbol{\aleph}{\mathord}{hebrewletters}{39}
\DeclareMathSymbol{\beth}{\mathord}{hebrewletters}{98}
\DeclareMathSymbol{\gimel}{\mathord}{hebrewletters}{103}
\DeclareMathSymbol{\daleth}{\mathord}{hebrewletters}{100}\let\dalet\daleth
\DeclareMathSymbol{\lamed}{\mathord}{hebrewletters}{108}
\DeclareMathSymbol{\mem}{\mathord}{hebrewletters}{109}
\DeclareMathSymbol{\ayin}{\mathord}{hebrewletters}{96}
\DeclareMathSymbol{\tsadi}{\mathord}{hebrewletters}{118}
\DeclareMathSymbol{\qof}{\mathord}{hebrewletters}{114}
\DeclareMathSymbol{\shin}{\mathord}{hebrewletters}{152}
\newcommand{\bal}[1]{\begin{align*}#1\end{align*}}
\newcommand{\bi}{\mathbf{i}} 
\newcommand{\bj}{\mathbf{j}}
\newcommand{\bk}{\mathbf{k}}
\newcommand{\Z}{\mathbb{Z}} 
\newcommand{\R}{\mathbb{R}}
\newcommand{\C}{\mathbb{C}}
\newcommand{\bH}{\mathbb{bH}}
\newcommand{\Ad}{\mathrm{Ad}}
\newcommand{\eps}{\varepsilon}
\newcommand{\scP}[2]{\left\langle #1, #2\right\rangle} 
\newcommand{\ovln}[1]{\overline{#1}}
\renewcommand{\phi}{\varphi}
\renewcommand{\Re}{\mathrm{Re}\,} 
\newcommand{\MM}{\mathcal{M}} 
\newcommand{\KK}{\mathcal{K}} 
\newcommand{\pp}{\mbox{\egothfamily p}}
\newcommand{\ac}[1]{\actuarialangle{#1}}
\declaretheoremstyle[headfont=\large\egothfamily, 
postheadspace=\newline, bodyfont=\itshape, spaceabove=0.5cm,
spacebelow=0.5cm, notebraces=| |, shaded={bgcolor={blue!15!gray!5}}]{myLemmaStyle}
\declaretheoremstyle[headfont=\large\egothfamily, 
postheadspace=\newline, bodyfont=\itshape, spaceabove=0.5cm,
spacebelow=0.5cm, notebraces=| |, shaded={bgcolor={yellow!20}}]{myThmStyle}
\declaretheorem[heading=Satz, parent = section, style= myThmStyle]{theo}
\declaretheorem[heading=Lemma, sibling=theo, style= myLemmaStyle]{lemma}
\declaretheorem[heading=Corollary, sibling=theo, style= myLemmaStyle]{coro}
\theoremstyle{definition}
\newtheorem{defi}[theo]{Definition}
\begin{document}

\begin{center}
{\Large A 3D Ginibre point field}

\bigskip
Vladislav Kargin\footnote{{email:
vladislav.kargin@gmail.com; current address: Department of Mathematics, Binghamton University, Binghamton, 13902-6000, USA}} 

\bigskip
\begin{quote}
We introduce a three-dimensional random point field using the concept of the quaternion determinant. Orthogonal polynomials on the space of pure quaternions are defined, and used to construct a kernel function similar to the Ginibre kernel. We find explicit formulas for the polynomials and the kernel, and calculate their asymptotics in the bulk and at the center of coordinates. 
\end{quote}
\end{center}

\tableofcontents

\section{Introduction}

Informally, a random point field is a configuration of points in a space selected according to a joint probability distribution.  If the joint distribution can be written as a product of marginal distributions for different points, then the points are ``independent'' or not
``interacting'', so that their locations are independent from each other. However, there are many interesting point fields, in which points are not independent.

An important class of such dependent point fields are determinantal fields, in which the joint distribution 
can be written as a determinant of a kernel function (see Dyson (\cite{dyson70}), Macchi (\cite{macchi75}, 
Mehta (\cite{mehta04}), Soshnikov (\cite{soshnikov00b}, Hough et al. (\cite{hkpv09})).

In particular, the eigenvalues of non-Hermitian Gaussian random matrices form a
determinantal field in the complex plane (\cite{ginibre65}) with an interesting complex-valued kernel,

\[
K_n(z,w) = \sum_{k = 0}^{n - 1} \frac{(z\overline{w})^k}{k!}.
\]

This random point field is called the Ginibre ensemble. It is quite remarkable since its kernel is among the small number of kernels which are invariant relative to a large group of complex plane transformations. (See Krishnapur \cite{krishnapur08} ).

The Ginibre kernel is quite natural in the sense that polynomials $z^k/\sqrt{k!}$ are orthonormal with respect to the standard Gaussian measure in the plane. In particular, this kernel can be considered as the kernel of the orthogonal projection from the space of all polynomials on $\C$ to the $n$ - dimensional space of complex valued polynomials of degree no larger than $n - 1$.

Our goal in this paper is to develop a parallel theory for a similar
quaternion-valued kernel,

\begin{equation}
\label{defi_kernel}
K_n(z,w) = \sum_{k = 0}^{n - 1} \frac{P_k(z)\overline{P_k(w)}}{h_k}.
\end{equation}

Here $z$ and $w$ are purely imaginary quaternions, which we identify with points in $\R^3$, and polynomials $P_k(z) \in \bH[z]$  are the monic quaternion polynomials of degree $k$, which are orthogonal with respect to the standard Gaussian measure on $\R^3$.

More precisely, let $\Lambda =\mathbb{R}^{3}.$ We identify elements or $\Lambda $ with
pure quaternions, that is, with elements of $\mathbb{H}$ that have no real
component, $z=x_{1}\mathbf{i}+x_{2}\mathbf{j}+x_{3}\mathbf{k}$. 

As the background measure on $\Lambda $, we take

 \bal{
 d\mu(z) = f(z)\, dm(z),
 }
  where $dm(z)$ is the
Lebesgue measure on $\mathbb{R}^3,$ and 

\begin{equation}
\label{background_density}
f(z) = (2\pi)^{-3/2}
e^{-\vert z\vert^{2}/2}.
\end{equation}

(In particular, $\int_{\mathbb{R}^3}d\mu
\left( z\right) =1.$) 

Next, let $\MM$ be the right $\bH$-module of polynomial functions on 
the space of imaginary quaternions
$\Lambda$. Elements of this module are defined as finite sums of $z^{k}a_{k},$ where $a_{k}$ are arbitrary
quaternions and $z \in \Lambda$,%
\begin{equation*}
P\left( z\right) =\sum_{k=0}^{n}z^{k}a_{k}.
\end{equation*} 

Let a scalar
product  on this space be defined by 
\begin{equation}
\scP{u(z)}{w(z)}=\int_{\Lambda }\overline{u}(z) w(z) \, d\mu(z) .  \label{scalar_product}
\end{equation}%
This function is linear in the second argument 
\bal{
\scP{u(z)}{w(z) \alpha} = \scP{u(z)}{w(z)} \alpha 
}
 for every $\alpha \in \bH$, and it also has the property that  
$\scP{w(z)}{u(z)} = \ovln{\scP{u(z)}{w(z)}}$.

In addition, the scalar product is positive definite:
\bal{
\scP{u(z)}{u(z)} \geq 0 \text{ for all } u(z) \in \MM, 
} 
and equality holds only if $u(z) = 0.$

We say that a family of quaternionic functions on $\Lambda ,$ $\{u_{k}\}$ is
\emph{orthonormal}, if 
 
\begin{equation}
\scP{u_k}{u_l} =
\begin{cases}
1, & \text{ if } k = l, 
\\
0, & \text{ if } k \ne l.  
\end{cases}
\end{equation}

Let $P_k(z)$ denote the system of monic orthogonal polynomials. We can show that this system exists and unique by the Gram-Schmidt orthogonalization process. 

 It turns out that $P_k(z) \ne z^k$, and we will study these polynomials in detail. 
The  constants $h_k$ in formula (\ref{defi_kernel}) ensure that polynomials $P_k(z)/\sqrt{h_k}$ have unit norm. 

The rest of the paper is organized as follows. In Section \ref{sectionOverview} we give an overview of main results. In Section \ref{section_defi} we briefly review the theory of quaternion determinantal fields. 
In Section \ref{section_ortho_poly} we study the orthogonal polynomials $P_k$ and the norm constants $h_k$. Section \ref{section_kernel} is devoted to the study of properties of the kernel. And Section \ref{sectionProblems} concludes with open problems.
 
 %

\section{Brief description of the results}
\label{sectionOverview}
For the monic orthogonal polynomials $P_n(x)$ we obtain a formula that relates them to Hermite polynomials $H_n(x)$ and calculate the norm of $P_n(x)$. 

\begin{restatable*}{theo}{theoHermite}
\label{theoHermite}
Let pure quaternion $x = u s$, where $s = |x|$ and $u = x/|x|$. Then 
\begin{equation}
\label{formula_explicit}
P_{n}(x)=\left\{ 
\begin{array}{cc}
\frac{u^n}{s}H_{n+1}(s), & \text{ if }n\text{ is even,} \\ 
\frac{u^n}{s^2} \left( s H_{n+1}(s)+H_n(s)\right) , & \text{if }n\text{ is odd.}%
\end{array}%
\right.
\end{equation}
\end{restatable*}

(The numbering of theorems refers to the section where they are proved. For example, this theorem is restated and proved in Section 4.)

The norms of these polynomials are given by the following formula,

\begin{equation*}
h_{n}:= \|P_n(x)\|^2 = \left\{ 
\begin{array}{cc}
n!(n+2), & \text{ if }n\text{ is odd,} \\ 
(n+1)!, & \text{if }n\text{ is even.}%
\end{array}%
\right.
\end{equation*}%

The kernel $K_n$ is defined by formula (\ref{defi_kernel}), and it turns out that an analogue of the Christoffel-Darboux formula holds. 

\begin{restatable*}{theo}{theoKernelFormula}
\label{theoKernelFormula}
Let $x = u s$ and $y = v t$, where $s = |x|$ and $t = |y|$. 
Suppose $ s \neq  t$. Then
\begin{align*}
K_n(u s, v t) =  \rho_n(s, t)\frac{1 - u v}{2}  + (-1)^n \delta_n(s, t) \frac{1 + u v}{2},
\end{align*}
where 
\begin{align*}
\rho_n(s, t) &= \frac{Q_n(s) Q_{n+1}(t) - Q_{n+1}(s) Q_n(t)}{h_n (t - s)}, \text{ and }
\\
\delta_n(s, t) &= \frac{Q_n(s) Q_{n+1}(t) + Q_{n+1}(s) Q_n(t)}{h_n (t + s)}. 
\end{align*}
\end{restatable*} 
Here the $Q_n(s) \in \Z[s]$  and determined by the following condition. If $u$ is a unit pure quaternion, then 

\bal{
P_{n}(u s) = u^n Q(s).
}

Next, define
\begin{equation}
\label{defiRho}
\rho(s) := \frac{1}{(2\pi)^2} \frac{\sqrt{1 - s^2}}{s^2} \text{ for } s \in (0, 1).
\end{equation}

\begin{restatable*}{theo}{theoDensity}
\label{theoDensity}
Let $\rho_n(x)$ denote the density of the random field at point $x \in \R^3$ with respect to the Lebesque measure on $\R^3$. Assume that $u$ is a pure unit quaternion and $s \in \R$, $s > 0$. Then,
\bal{ 
\lim_{n \to \infty} 2 \sqrt{n} \rho_n\big(2\sqrt{n} u s\big) =
\begin{cases}
\rho(s), & \text{ if } s < 1, \\
0 & \text{ if } s > 1.
\end{cases}
}
\end{restatable*}

Intuitively the theorem says that (i) with high probability a random field point is contained in the ball of radius $2\sqrt{n}$; (ii) the average density of points in the ball is $1/\sqrt{n}$, and (iii) after rescaling the density has the form given by the function $\rho(s)$.

 Next, let us define the \emph{radial density} 
\bal{
\pp_n(r) = \int_{r S^2} \rho_n(x) \, d\mu(x), 
}
where $r S^2$ is the sphere of radius $r$ and $d\mu(x)$ is the measure on $r S^2$ induced by the Lebesgue measure on $\R^3$.
The radial density summarizes how the points are distributed over the different spherical shells. Inside each shell, the distribution is rotationally invariant.

\begin{restatable*}{coro}{coroRadialDensity}
\label{coroRadialDensity}
We have 
\bal{
\lim_{n \to \infty} \frac{1}{2 \sqrt{n}} \pp_n\big(2\sqrt{n} s\big) =
\begin{cases}
\pp(s) :=\frac{1}{\pi}\sqrt{1 - s^2}, & \text{ if } s < 1, \\
0 & \text{ if } s > 1,
\end{cases}
}
\end{restatable*}

Next, we check that if $x$ and $y$ are fixed and different, then the limit of the expression for $K_n(\sqrt{n}x, \sqrt{n}y)$  
 is $0$ for $n\to \infty$ . There are no 2-correlations at this scale. 
 
 In order to understand what happens when  $x$ is sufficiently close to $y$, we consider the case, in which
 $y = x + \frac{\tau}{n}$. We will correct it a bit since the scaling depends on the density of the field, and the density varies with $x$.

 Let 
 \begin{restatable}{equation}{defiXPhi}
 x_0 \in [\eps, 1 - \eps], \text{ and } \phi = \arccos x_0.
\end{restatable}

Define 
\begin{restatable}{equation}{defiSnTn}
\begin{split}
s_n &= 2\sqrt{n + 3/2} \cos\Big(\phi + \frac{\sigma}{2 \sin^2 \phi} \frac{1}{n}\Big), \\
 t_n &= 2 \sqrt{n + 3/2}\cos\Big(\phi + \frac{\tau}{2 \sin^2 \phi} \frac{1}{n}\Big),
\end{split}
\end{restatable}
where $\sigma$ and $\tau$ are real parameters. (We use factor $\sqrt{n + 3/2}$ for the convenience of calculations in the proof. The factor   $\sqrt{n}$ works equally well.) Note that $|s_n - t_n|$ is of the order $1/\sqrt{n}$. 

It will be convenient to use the notation 
\begin{restatable}{equation}{defiKK}
\KK_n\big((u, \sigma), (v, \tau); x_0\big) = 
\frac{2 \sqrt{n + \frac{3}{2}}}{\rho(x_0)}K_n(u s_n, v t_n) 
\sqrt{f(s_n)f(t_n)},
\end{restatable}
where $u$ and $v$ are pure unit quaternions, $\sigma$ and $\tau$ are real, $x_0 \in [\eps, 1 - \eps]$, and $\rho(x_0)$ is as defined in (\ref{defiRho}).

\begin{restatable*}{theo}{theoBulkAsymptotic}
\label{theoBulkAsymptotic}
Let $\sigma, \tau \in \R$ and $u, v \in S^2$, the space of pure unit quaternions. Then,
\bal{
\lim_{n \to \infty} \KK_n\big((u, \sigma), (v, \tau); x_0\big) = \KK\big((u, \sigma), (v, \tau)) :=\frac{\sin(\tau - \sigma)}{\tau - \sigma} \frac{1 - uv}{2}. 
 }
\end{restatable*}

This gives us a kernel on the space $S^2 \times \R$ as a scaling limit of the quaternion kernels on $\R^3$.

At the center the situation is different.

\begin{restatable*}{theo}{theoKernelCenter}
\label{theoKernelFormula}
Let $x = u s$ and $y = v t$, where $s = |x|$ and $t = |y|$. 
Suppose $ s \neq  t$. Then
\begin{align*}
K_n(u s, v t) e^{-\frac{s^2 + t^2}{4}} 
&= \sqrt{\frac{2}{\pi}} \frac{1}{s t}\Big\{\frac{\sin\big[\sqrt{n}(t - s)\big]}{t - s}\frac{1 - u v}{2}  
\\
&+ \frac{\sin\big[\sqrt{n}(t + s)\big]}{t + s } \frac{1 + u v}{2} + O(n^{-1/2})\Big\},
\end{align*}
\end{restatable*}

In the next section we begin a more detailed discussion of these results.
%

\section{Definition of the determinantal point field}
\label{section_defi}
\subsection{Existence results}
 Let $\Lambda =\mathbb{R}^{n}$ and the background measure $\mu $ be a Radon measure on $\Lambda $
(that is, a Borel measure which is finite on compact sets). 
\begin{defi}
A \emph{random point field} $\mathcal{X}$ on $%
\Lambda $ is a \textrm{random} \textrm{integer-valued} positive Radon 
\textrm{measure} on $\Lambda $. 
\end{defi}
For a set $D \in \Lambda$, we can interpret $\mathcal{X}(D)$  as the number of points in $D$ (counted with multiplicities, perhaps). We will assume our fields to be \emph{simple}, so that each point has a multiplicity at most $1$.

A random point field can be described by its correlation functions.

\begin{defi}
A locally integrable function $R_{k}$: $\Lambda ^{k}\rightarrow \mathbb{R}%
_{+}^{1}$ is called a $k$\emph{-point correlation function} of a random
point field $\mathcal{X}$ if for any disjoint measurable subsets $%
A_{1},\ldots ,A_{m}$ of $\Lambda $ and any non-negative integers $%
k_{1},\ldots ,k_{m},$ such that $\sum_{i=1}^{m}k_{i}=k,$ the following
formula holds: 
\begin{eqnarray}
&&\mathbb{E}\prod_{i=1}^{m}\left[ \mathcal{X}\left( A_{i}\right) \ldots
\left( \mathcal{X}\left( A_{i}\right) -k_{i}+1\right) \right] \\
&=&\int_{A_{1}^{k_{1}}\times \ldots \times A_{m}^{k_{m}}}R_{k}\left(
x_{1},\ldots ,x_{k}\right) d\mu \left( x_{1}\right) \cdots d\mu \left(
x_{k}\right) ,  \label{def_correlation_formulas}
\end{eqnarray}%
where $\mathbb{E}$ denote expectation with respect to random measure $%
\mathcal{X}$.
\end{defi}

If we set all $k_i = 1$ and let $A_i$ to be infinitesimally small, then we can see that $R_{k}(x_1,\ldots ,x_k)$ is the density of the probability to find a point in a neighborhood of each of the points $x_i$.

More generally, left hand side can be interpreted as a joint ordered moment of the number of
points in sets $A_{i}.$

A random point field is traditionally called determinantal if all of its
correlation functions can be represented as determinants:%
\begin{equation}
R_{k}(x_{1},\ldots ,x_{k})=\left. \text{\textrm{Det}}[K(x_{i},x_{j})]\right%
\vert _{1\leq i,j\leq k},\text{ for all }k\in \mathbb{N},
\label{correlations_standard}
\end{equation}%
where $K\left( x,y\right) $ is a kernel function that maps $\Lambda \times
\Lambda $ to $\mathbb{C}$. We extend this definition to include the case of quaternion determinants. 
\begin{defi}
A random point field on $%
\Lambda $ is \emph{determinantal} if its correlation functions are given by
the formula: 
\begin{equation}
R_{k}(x_{1},\ldots ,x_{k})=\left. \text{\textrm{Det}}_{M}[K(x_{i},x_{j})]%
\right\vert _{1\leq i,j\leq k},\text{ for all }k\in \mathbb{N},
\label{correlations}
\end{equation}%
where $K$ maps $\Lambda \times \Lambda $ to $\mathbb{H}$, the skew field of
(real) quaternions, and $\mathrm{Det}_{M}$ is the Dyson-Moore determinant.
\end{defi}

For the case when kernel $K(x, y)$ takes values in $\mathbb{C}$, this definition agrees
with definition in (\ref{correlations_standard}). For quaternion kernels $K(x,y),$
this definition only makes sense if the kernel $K$ is self-dual, that is, $%
K(w,z)=\overline{K(z,w)}$ . (See \cite{aslaksen96}, \cite{zhang97}, and \cite%
{farenick_pidkowich03} for more information about quaternion matrices and
their determinants.)

A basic question is \textquotedblleft Which kernels lead to a valid
collection of correlation functions?\textquotedblright\ One sufficient
condition is as follows.

%

\begin{theo}[Dyson-Mehta]
\label{theorem_Dyson} Suppose that the kernel $K_N:\Lambda \times \Lambda
\rightarrow \mathbb{H}$ can be written as 
\begin{equation*}
K_N(x,y)=\sum\limits_{k=1}^{N}u_{k}\left( x\right) \overline{u}_{k}\left(
y\right) ,
\end{equation*}%
where $u_{k}\left( x\right) $ is an orthonormal system of quaternion
functions on $\Lambda $. Then there exists a random point field on $\Lambda $
with the correlations given by (\ref{correlations}), and the total number of
points is almost surely equals $N.$
\end{theo}

This result was essentially proved by Dyson (\cite{dyson70}) and generalized  by Mehta (\cite{mehta04}, Theorem 5.1.4). 

%

By Theorem \ref{theorem_Dyson} the kernel 
\begin{equation*}
K_n(z,w) = \sum_{k = 0}^{n - 1} \frac{P_k(z)\overline{P_k(w)}}{h_k}
\end{equation*}
 defines a valid determinantal field provided that $P_k(z)/h_k$ are orthonormal polynomials with respect to the measure $d\mu(z)$. 
 
 The next section is devoted to the existence and properties of these  polynomials. Then we will study the properties of the kernel.

%

\section{Orthogonal polynomials}
\label{section_ortho_poly}

%

\subsection{Scalar Product}

We look for the orthonormal basis of the space of polynomials $\MM$ with respect to the scalar
product 
\begin{equation}
\left\langle u(z),w(z)\right\rangle =\int_{\Lambda }\overline{u}\left(
z\right) w\left( z\right) d\mu \left( z\right) .  \label{scalar_product}
\end{equation}%
We can find this basis by applying the Gram-Schmidt procedure to the
monomial basis $\left. \left\{ z^{k}\right\} \right\vert _{k=0}^{n}.$ For
this purpose we compute the scalar products of monomials.

\begin{theo}
For all non-negative integers $m$ and $n$ 
\begin{equation*}
\left\langle z^{m},z^{n}\right\rangle =\left\{ 
\begin{array}{cc}
\left( -1\right) ^{\frac{n-m}{2}}(m+n+1)!!, & \text{if }n-m\text{ is even, }
\\ 
0 & \text{if }n-m\text{ is odd.}%
\end{array}%
\right.
\end{equation*}
\label{propo_scalar_product}
\end{theo}

The proof of this result can be found in Appendix \ref{sectionPropoScalar}.

\begin{table}[tbp]
\begin{tabular}{c|ccccccc}
m/n & 0 & 1 & 2 & 3 & 4 & 5 & 6 \\ \hline
0 & 1 & 0 & $-3$ & 0 & $5 \cdot 3$ & 0 & $-7!!$ \\ 
1 & 0 & $3$ & 0 & $-5 \cdot 3$ & 0 & $7!!$ & 0 \\ 
2 & $-3$ & 0 & $5 \cdot 3$ & 0 & $- 7!!$ & 0 & $9!!$ \\ 
3 & 0 & $- 5 \cdot 3$ & 0 & $7!!$ & 0 & $- 9!!$ & 0 \\ 
4 & $5 \cdot 3$ & 0 & $- 7!!$ & 0 & $9!!$ & 0 & $- 11!!$ \\ 
5 & 0 & $7!!$ & 0 & $- 9!!$ & 0 & $11!!$ & 0 \\ 
6 & $- 7!!$ & 0 & $9!!$ & 0 & $- 11!!$ & 0 & $13 !!$ \\ 
&  &  &  &  &  &  & 
\end{tabular}%
\caption{Scalar products of monomials, $\left\langle
z^{m},z^{n}\right\rangle $.}
\label{table_scalar_products}
\end{table}

Since all entries in the matrix of scalar products are real, we derive an
important consequence that the coefficients of  monic orthogonal polynomials
are real.

Note that $\langle z, z \rangle \ne \langle 1, z^2 \rangle$, which means
that the scalar product cannot be written as $\langle P(z), Q(z) \rangle =
\int_{\mathbb{R}}{P(x) Q(x) \mu(dx)} $ for a measure $\mu$ on the real line.

\subsection{Three-term recurrence relation}

By usual means, we can derive the three-term recurrence relation for the
orthogonal polynomials.

\begin{theo}
Suppose that $P_{n}\left( z\right) $ are monic polynomials orthogonal with
respect to the scalar product in (\ref{scalar_product}), and that $%
P_{0}\left( z\right) =1,$ and $P_{1}(z)=z$. Then these polynomials satisfy
the following recurrence relation:%
\begin{equation}
P_{n+1}\left( z\right) =zP_{n}\left( z\right) +\beta _{n}P_{n-1}\left(
z\right) ,  \label{ThreeTermRelation}
\end{equation}%
where $\beta _{n}$ are some real positive coefficients, and 
\begin{equation*}
\beta _{n}=\frac{\langle P_{n},P_{n}\rangle }{\langle P_{n-1},P_{n-1}\rangle 
}.
\end{equation*}
\end{theo}

\textbf{Proof:} Let us start with the expression 
\begin{equation*}
\psi _{n}\left( z\right) =P_{n+1}\left( z\right) -P_{n}\left( z\right)
\left( z-\alpha _{n}\right) -P_{n-1}\left( z\right) \beta _{n}.
\end{equation*}%
Then $\left\langle P_{k},\psi _{n}\right\rangle =0$ for all $k\leq n-2.$ We
also calculate that 
\begin{equation*}
\left\langle P_{n-1},\psi _{n}\right\rangle =-\left\langle
P_{n-1,}zP_{n}\right\rangle -\left\langle P_{n-1},P_{n-1}\right\rangle \beta
_{n}=0,
\end{equation*}%
if we take 
\begin{equation}
\label{equ_beta}
\beta _{n}=-\frac{\left\langle P_{n-1,}zP_{n}\right\rangle }{\left\langle
P_{n-1},P_{n-1}\right\rangle }.
\end{equation}

Similarly, 
\begin{equation*}
\left\langle P_{n},\psi _{n}\right\rangle =-\left\langle
P_{n,}zP_{n}\right\rangle +\left\langle P_{n},P_{n}\right\rangle \alpha
_{n}=0,
\end{equation*}%
if we take 
\begin{equation*}
\alpha _{n}=\frac{\left\langle P_{n,}zP_{n}\right\rangle }{\left\langle
P_{n},P_{n}\right\rangle }.
\end{equation*}

With this choice of $\alpha _{n}$ and $\beta _{n},$ polynomial $\psi _{n}$
has degree $n$ and orthogonal to $P_{k}$ for all $k\leq n$. Since
polynomials $P_{k}$ form a basis, hence $\left\langle \psi _{n},\psi
_{n}\right\rangle =0$. By using the definition of the scalar product (\ref%
{scalar_product}) we conclude that $\psi _{n}=0.$ Hence 
\begin{equation*}
P_{n+1}\left( z\right) =P_{n}\left( z\right) \left( z-\alpha _{n}\right)
+P_{n-1}\left( z\right) \beta _{n}.
\end{equation*}%
By induction it is easy to see that all $P_{2k+1}$ are odd polynomials, all $%
P_{2k}$ are even, and thereforre $\alpha _{n}=0$ for all $n$. In addition, it is easy to see that
all $\beta _{n}$ are real.

To conclude, we need to modify formula (\ref{equ_beta}) for $\beta_n$.
By using property $\langle z^{m + 1}, z^n \rangle = - \langle z^m, z^{n+1}
\rangle$ and the fact that the coefficients of orthogonal polynomials are
real, we see that $\langle P_{n-1}, z P_n \rangle = - \langle z P_{n-1}, P_n
\rangle$.

Since $z P_{n-1} = P_n - \beta_{n - 1} P_{n-2}$, we compute $\langle z
P_{n-1}, P_n \rangle = \langle P_n, P_n \rangle$, and therefore, $\beta_n=
\langle P_n, P_n \rangle / \langle P_{n - 1}, P_{n - 1} \rangle$. This
concludes the proof of the proposition. \hfill $\square $

Table \ref{TablePolynomials} is the table of the first orthogonal monic
polynomials together with recursion coefficients $\beta _{n}$ and the
squared norms of polynomials $h_{n}:=\left\langle P_{n},P_{n}\right\rangle .$

\begin{table}[tbp]
\begin{tabular}{l|l|l|c}
$n$ & $P_n$ & $h_n$ & $\beta_n$ \\ \hline
0 & 1 & 1 & * \\ 
1 & $z$ & 3 & 3 \\ 
2 & $z^2 + 3$ & 6 & 2 \\ 
3 & $z^3 + 5z$ & 30 & 5 \\ 
4 & $z^4 + 10z^2 + 15$ & 120 & 4 \\ 
5 & $z^5 + 14z^3 + 35z$ & 840 & 7 \\ 
6 & $z^6 + 21z^4 + 105z^2 + 105$ & 5,040 & 6 \\ 
7 & $z^7 + 27z^5 + 189z^3 + 315z$ & 45,360 & 9 \\ 
8 & $z^8 + 36z^6 + 378z^4 + 1260z^2 + 945$ & 362,880 & 8 \\ 
9 & $z^9 + 44z^7 + 594z^5 + 2772z^3 + 3465z$ & 3,991,680 & 11%
\end{tabular}%
\caption{Monic orthogonal polynomials, their squared norms and $\protect\beta%
_n$.}
\label{TablePolynomials}
\end{table}

In the next step, we are going to derive more explicit formulas for the
orthogonal polynomials $P_n$, their norms $h_n$, and coefficients $\beta_n$.

\pagebreak

\subsection{Determinantal formulas}

Let $s_{ij}:=\left\langle z^{i},z^{j}\right\rangle .$ (These are elements of
the infinite matrix in Table \ref{table_scalar_products}.) And let $D_{n}$
denote the principal submatrices of the matrix of scalar products: 
\begin{equation*}
D_{n}=\left( 
\begin{array}{cccc}
s_{00} & s_{01} & \cdots & s_{0n} \\ 
s_{10} & s_{11} & \cdots & s_{1n} \\ 
\vdots & \vdots & \ddots & \vdots \\ 
s_{n0} & s_{n1} & \cdots & s_{nn}%
\end{array}%
\right) .
\end{equation*}%
Finally let $\left\vert D_{n}\right\vert $ denotes $\det (D_{n})$.

\begin{theo}
The monic orthogonal polynomials are given by the formula%
\begin{equation*}
P_{n}\left( z\right) =\frac{1}{\left\vert D_{n-1}\right\vert }\det \left( 
\begin{array}{ccccc}
s_{00} & s_{01} & \cdots & s_{0,n-1} & s_{0n} \\ 
s_{10} & s_{11} & \cdots & s_{1,n-1} & s_{1n} \\ 
\vdots & \vdots & \ddots & \vdots & \vdots \\ 
s_{n-1,0} & s_{n-1,1} & \cdots & s_{n-1,n-1} & s_{n-1,n} \\ 
1 & z & \cdots & z^{n-1} & z^{n}%
\end{array}%
\right) .
\end{equation*}%
Their squared norms are $h_{n}:=\left\langle P_{n}\left( z\right)
,P_{n}\left( z\right) \right\rangle =\left\vert D_{n}\right\vert /\left\vert
D_{n-1}\right\vert .$
\end{theo}

\textbf{Proof:} The polynomials are clearly monic. In order to prove
orthogonality, we write 
\begin{equation*}
\left\langle z^{m},P_{n}\left( z\right) \right\rangle =\frac{1}{\left\vert
D_{n-1}\right\vert }\det \left( 
\begin{array}{ccccc}
s_{00} & s_{01} & \cdots & s_{0,n-1} & s_{0n} \\ 
s_{10} & s_{11} & \cdots & s_{1,n-1} & s_{1n} \\ 
\vdots & \vdots & \ddots & \vdots & \vdots \\ 
s_{n-1,0} & s_{n-1,1} & \cdots & s_{n-1,n-1} & s_{n-1,n} \\ 
\left\langle z^{m},1\right\rangle & \left\langle z^{m},z\right\rangle & 
\cdots & \left\langle z^{m},z^{n-1}\right\rangle & \left\langle
z^{m},z^{n}\right\rangle%
\end{array}%
\right) .
\end{equation*}%
This equals $0$ for $m\leq n-1$ because there are two coinciding rows.

For $m=n,$ we have $\left\langle z^{n},P_{n}\left( z\right) \right\rangle
=\left\vert D_{n}\right\vert /\left\vert D_{n-1}\right\vert .$ Since the
polynomials are monic, $\left\langle P_{n}\left( z\right) ,P_{n}\left(
z\right) \right\rangle $ = $\left\langle z^{n},P_{n}\left( z\right)
\right\rangle =\left\vert D_{n}\right\vert /\left\vert D_{n-1}\right\vert .$
\hfill $\square $

\begin{figure}[tbp]
\includegraphics[width=\textwidth]{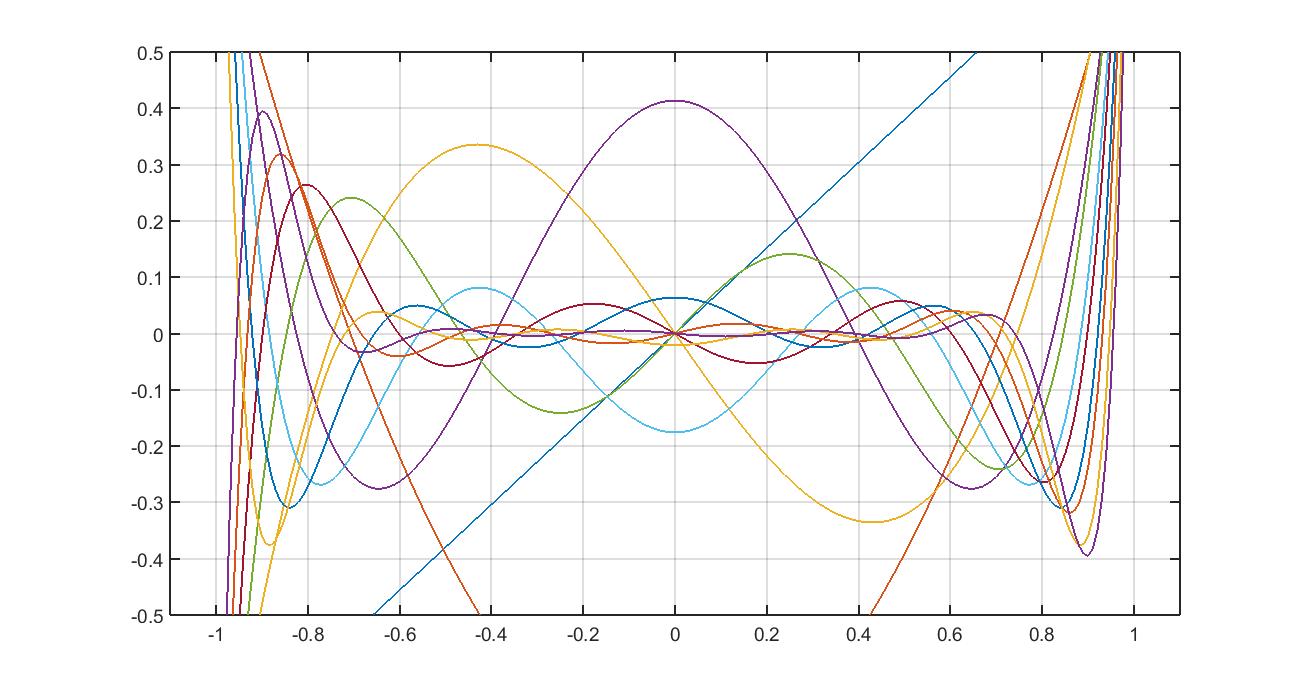}
\caption{Plot of $h_n^{-3/4} P_n(i\protect\sqrt{3n}x)$ 
for $n = 1, \ldots, 9$. The scaling exponent $-3/4$ was chosen ad hoc to fit the plots on the Figure.}
\label{figure_polynomials}
\end{figure}

\pagebreak

\subsection{Norm of polynomials} $\,$ \\

The patterns for $h_n$ and $\beta_n$ are clear from Table \ref%
{TablePolynomials}.

\begin{theo}
\label{h_and_beta} 
\begin{equation*}
h_{n}=\left\{ 
\begin{array}{cc}
n!(n+2), & \text{ if }n\text{ is odd,} \\ 
(n+1)!, & \text{if }n\text{ is even.}%
\end{array}%
\right.
\end{equation*}%
\begin{equation*}
\beta _{n}=\left\{ 
\begin{array}{cc}
n+2, & \text{ if }n\text{ is odd,} \\ 
n, & \text{if }n\text{ is even.}%
\end{array}%
\right.
\end{equation*}
\end{theo}

The proof proceeds by exhibiting an explicit formula for the determinant 
$\vert D_n \vert$. 
Since the proof is rather lengthy, we omit it due to the space constraints.

As a corollary, we find that $\vert D_n \vert > 0$ for all $n$ and, therefore,  all eigenvalues of matrix $D_n$ are positive. After some work, this implies that the scalar product $\scP{u}{v}$ is positive definite on $\MM$.

\pagebreak

%

\subsection{Relation to Hermite polynomials}

%

\theoHermite

\begin{proof}
The first monic Hermite polynomials are $H_{0}=1,$ $H_{1}=x,$ $%
H_{2}=x^{2}-1,$ and $H_{3}=x^{3}-3x,$ and direct verification shows that the statement holds for $n=0$ and $n=1.$ 

We verify by induction that the polynomials on the right hand side (RHS) of 
(\ref{formula_explicit}) satisfy the same 3-term recurrence as $%
P_{n} $. Recall the 3-term recurrence
for Hermite polynomials: $H_{n+1}=xH_{n}-nH_{n-1}.$

For an odd $n$ we need to check that $xP_{n} + (n+2)
P_{n-1}=P_{n+1}.$ Since 
\begin{equation*}
P_n  = \frac{u^n}{s^2}\left( s H_{n+1}+H_n\right) =\frac{u^n}{s^2}\left(
H_{n+2}+(n+2) H_n\right) ,
\end{equation*}%
by hypothesis, we compute left-hand side of the recursion as
\begin{equation*}
\frac{u s u^n}{s^2}\left( H_{n+2}+(n+2)H_n\right)  + \frac{(n+2)u^{n - 1}}{s}H_{n}=
- \frac{u^{n - 1}}{s} H_{n+2} = \frac{u^{n + 1}}{s}H_{n+2},
\end{equation*}%
which is the right hand side. Here we used the fact that $u^2 =  - 1$.  

For an even $n$ we should have $xP_{n} + n P_{n-1} = P_{n+1}$. For the left hand
side, we compute:%
\begin{eqnarray*}
&& u^{n + 1}H_{n+1} + \frac{n u^{n-1}}{s^2} \left( s H_{n} + H_{n-1}\right) \\
&=& u^{n + 1}H_{n+1} + \frac{n u^{n-1}}{s}  H_{n} +  \frac{u^{n-1}}{s^2}\left( s H_n - H_{n+1}\right) \\ 
& = & u^{n + 1} \left( (1 + s^{-2}) H_{n+1} - \frac{n+1}{s} H_n \right),
\end{eqnarray*}%
and for the right hand side: 
\begin{eqnarray*}
& & \frac{u^{n+1}}{s^2}\left( s H_{n+2} + H_{n+1}\right) \\
& = & \frac{u^{n+1}}{s^2}\left( s^2 H_{n+1} - (n+1)s H_n +  H_{n+1}\right) \\
& = & u^{n + 1} \left( (1 + s^{-2}) H_{n+1} - \frac{n+1}{s} H_n \right).
\end{eqnarray*}
Together,
these equalities verify the desired recursion identity.
\end{proof}

%

\subsection{$Q$ polynomials}

Let $s$ be real. Then we can define the polynomials $Q(s)$ by the formula:

\begin{restatable}{equation}{equationPQ}
P_{n}(u s) = u^n Q(s).
\end{restatable}

The first ten $Q_{n}$ are shown in Table \ref{TableQ}.

\begin{table}[tbp]
\begin{tabular}{l|l}
$n$ & $Q_n(x)$ \\ \hline
0 & 1 \\ 
1 & $x$ \\ 
2 & $x^2 - 3$ \\ 
3 & $x^3 - 5x$ \\ 
4 & $x^4 - 10x^2 + 15$ \\ 
5 & $x^5 - 14x^3 + 35x$ \\ 
6 & $x^6 - 21x^4 + 105x^2 - 105$ \\ 
7 & $x^7 - 27x^5 + 189x^3 - 315x$ \\ 
8 & $x^8 - 36x^6 + 378x^4 - 1260x^2 + 945$ \\ 
9 & $x^9 - 44x^7 + 594x^5 - 2772x^3 + 3465x$%
\end{tabular}%
\caption{$Q_n(x)$ polynomials}
\label{TableQ}
\end{table}

\begin{theo}
(i) Polynomials $Q_{n}(x)$ satisfy the following recursion: 
\begin{equation}
Q_{n+1}\left( x\right) =xQ_{n}\left( x\right) -\beta _{n}Q_{n-1}\left(
x\right) ,  \label{ThreeTermQ}
\end{equation}%
\newline
(ii) Polynomials $Q_{n}(x)$ are orthogonal with respect to a non-negative
measure $\nu $ on $\mathbb{R}$.\newline
(iii) The coefficients of every polynomial $Q_{n}(x)$ are real.\newline
(iv) All the zeros of a polynomial $Q_{n}\left( x\right) $ are simple and
real.\newline
(v) \ Any two zeros of a polynomial $Q_{n}\left( x\right) $ are separated by
a zero of polynomial $Q_{n-1}\left( x\right) $ and vice versa.
\end{theo}

\begin{proof} Formula (\ref{ThreeTermQ}) follows from the properties of $%
P_{n}(x).$ Claim (ii) follows by Favard's theorem, because $\beta _{n}$ are
positive. Claim (iii) is implied by (i) becase $\beta _{n}$ are real. Claims
(iv) and (v) are implied by (ii), see Theorem 1.2.2 in Akhieser \cite%
{akhieser61}. 
\end{proof}

\begin{theo}
The polynomials $Q_{n}\left( x\right) $ are monic orthogonal polynomials
with respect to measure $\nu $ with density 
\begin{equation*}
f(t)=\frac{t^{2}}{\sqrt{2\pi }}e^{-t^{2}/2},
\end{equation*}%
defined on all real line.
\end{theo}

\begin{proof} The moments of this measure are $m_{2k+1}=0$ and $%
m_{2k}=(2k+1)!!.$ By using these moments, we can calculate the coefficients
in the 3-term recurrence relation for orthogonal polynomials related to this
measure. This is done as in the proof of Proposition \ref{h_and_beta}. It
turns out that these coefficients are the same as for the polynomials $Q_{n}$.
 Since the initial conditions are also satisfied, $Q_{n}$ are the monic
orthogonal polynomials for the measure $\nu$.
\end{proof}

%
\section{Kernel}
\label{section_kernel}

\subsection{Definition}

Define 
\begin{equation*}
K_{n}\left( x,y\right) =\sum_{k=0}^{n}\frac{P_{k}\left( x\right) \overline{%
P_{k}\left( y\right) }}{h_{k}}.
\end{equation*}
As we mentioned before, this is a valid kernel for the quaternion field with respect to the
background measure on $\R^3$, $d\mu \left( z\right) =\left( 2\pi \right)
^{-3/2}e^{-\left\vert z\right\vert ^{2}/2}dm(z)$, where $dm(z)$ denotes the
Lebesgue measure on $\mathbb{R}^3.$

%

\subsection{Formulae for the kernel}

%

\subsubsection{Christoffel-Darboux relation}

\begin{theo}
The following Christoffel-Darboux formula holds for all $n\geq 0,$ and all
imaginary quaternions $x$ and $y$:%
\begin{equation}
xK_{n}\left( x,y\right) +K_{n}\left( x,y\right) \overline{y}=\frac{%
P_{n+1}\left( x\right) \overline{P_{n}\left( y\right) }+P_{n}\left( x\right) 
\overline{P_{n+1}\left( y\right) }}{h_{n}}.  \label{ChristoffelDarboux}
\end{equation}
\end{theo}

\textbf{Proof:} By using the three-term recurrence relation (\ref%
{ThreeTermRelation}) and setting $P_{-1}\left( x\right) =0$, we can write%
\begin{equation*}
xK_{n}\left( x,y\right) =\sum_{k=0}^{n}\frac{(P_{k+1}\left( x\right) -\beta
_{k}P_{k-1}\left( x\right) )\overline{P_{k}\left( y\right) }}{h_{k}}.
\end{equation*}%
Similarly, 
\begin{equation*}
K_{n}\left( x,y\right) \overline{y}=\sum_{k=0}^{n}\frac{P_{k}\left( x\right) 
\overline{yP_{k}\left( y\right) }}{h_{k}}=\sum_{k=0}^{n}\frac{P_{k}\left(
x\right) \left( \overline{P_{k+1}\left( y\right) }-\beta _{k}\overline{%
P_{k-1}\left( y\right) }\right) }{h_{k}}.
\end{equation*}

If we add these two expressions together and use the fact that $\beta
_{k}=h_{k}/h_{k-1},$ then we obtain equation (\ref{ChristoffelDarboux}). $%
\square $

%

\subsubsection{Explicit expression for the kernel}


%

 We solve equation (\ref%
{ChristoffelDarboux}) by using a result from Janovsk\'{a} and
Opfer \cite{Janovska_Opfer08}.

An equation $Ax+xB=C$ is called \emph{singular} if the equation $Ax+xB=0$
has a non-zero solution.

%

\begin{lemma}[Janovska-Opfer]
\label{JanovskaOpfer} The equation 
\begin{equation*}
Az+zB=C;\text{ \ \ }A,B,C,z\in \mathbb{H}\text{, }AB\neq 0,
\end{equation*}%
is singular if and only if $A$ and $-B$ are equivalent (that is, $\left\vert
A\right\vert =\left\vert B\right\vert ,$ and $\mathrm{Re}(A)=\mathrm{Re}(-B)$%
). If it is non-singular, then its solution is 
\begin{equation}
z=f_{l}^{-1}\left( C+A^{-1}C\overline{B}\right) =(C+\overline{A}%
CB^{-1})f_{r}^{-1},  \label{equation_solution}
\end{equation}%
where $f_{l}=2\mathrm{Re}\left( B\right) +A+\left\vert B\right\vert
^{2}A^{-1}, $ and $f_{r}=2\mathrm{Re}(A)+B+\left\vert A\right\vert
^{2}B^{-1}.$
\end{lemma}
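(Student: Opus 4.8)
The plan is to read $L(z) := Az + zB$ as an $\mathbb{R}$-linear endomorphism of $\mathbb{H} \cong \mathbb{R}^4$ (a quaternionic Sylvester operator) and to attack the two assertions separately, showing that both the singularity criterion and the solvability of the formula are governed by the single quaternion $f_l$ (respectively $f_r$). Since $L$ acts on a finite-dimensional real space, it is bijective if and only if it is injective, so ``non-singular'' is exactly the statement $\ker L = 0$, which is what guarantees a unique solution once we have one.

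First I would settle the singularity criterion. A nonzero $z$ with $Az + zB = 0$ satisfies $Az = -zB$, hence $-A = zBz^{-1}$, so $-A$ is conjugate (similar) to $B$. Two quaternions are similar precisely when they share real part and modulus; this translates to $\mathrm{Re}(A) = -\mathrm{Re}(B)$ and $|A| = |B|$, i.e.\ to the equivalence of $A$ and $-B$ in the sense stated. Conversely, if $A$ and $-B$ are equivalent there is a unit $z$ realizing the conjugacy, giving a nonzero kernel element. Thus singular $\iff A$ and $-B$ equivalent.

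Next I would derive the inversion formula by a direct substitution. Writing $C = Az + zB$ and simplifying $C + A^{-1}C\overline{B}$ using the identities $A^{-1}A = 1$, $B\overline{B} = |B|^2 \in \mathbb{R}$, and $B + \overline{B} = 2\mathrm{Re}(B)$, the mixed terms collapse and I expect to obtain $C + A^{-1}C\overline{B} = \big(A + 2\mathrm{Re}(B) + |B|^2 A^{-1}\big)z = f_l z$. The mirror-image computation applied to $C + \overline{A}CB^{-1}$, using $\overline{A}A = |A|^2$ and $A + \overline{A} = 2\mathrm{Re}(A)$, gives $C + \overline{A}CB^{-1} = z\,f_r$. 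Hence every solution $z$ must satisfy $f_l z = C + A^{-1}C\overline{B}$ and $z f_r = C + \overline{A}CB^{-1}$.

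Finally I would link $f_l \neq 0$ to non-singularity and conclude. Left-multiplying by $A$ gives the scalar-plus-vector identity $A f_l = 2\big(\mathrm{Re}(A) + \mathrm{Re}(B)\big)A + \big(|B|^2 - |A|^2\big)$; separating real and vector parts shows $f_l = 0$ forces $\mathrm{Re}(A) = -\mathrm{Re}(B)$ and $|A| = |B|$, that is, exactly the equivalence of $A$ and $-B$ (the degenerate case $A \in \mathbb{R}$ must be checked by hand, where the real-part condition factors as $(\mathrm{Re}(A)+\mathrm{Re}(B))^2 + |\mathrm{Im}(B)|^2 = 0$ and yields $B = -A$). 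Combined with the first paragraph, this gives non-singular $\iff f_l \neq 0$. Then $f_l$, being a nonzero quaternion, is invertible, so the unique solution (which exists because $\ker L = 0$ makes $L$ bijective) is forced to be $z = f_l^{-1}\big(C + A^{-1}C\overline{B}\big)$, and identically $z = \big(C + \overline{A}CB^{-1}\big)f_r^{-1}$. The main obstacle I anticipate is precisely this last linkage: verifying that $f_l = 0$ is equivalent to the conjugacy condition, together with the bookkeeping of the degenerate real cases, so that the two independently derived characterizations of ``singular'' genuinely coincide.
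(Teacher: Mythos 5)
Your proposal is correct and its core — substituting $C=Az+zB$ into $C+A^{-1}C\overline{B}$ and collapsing it to $f_l z$ via $B+\overline{B}=2\mathrm{Re}(B)$ and $B\overline{B}=|B|^2$ — is exactly the computation in the paper's sketch. The only difference is that you also supply the steps the paper merely asserts: the similarity argument ($Az=-zB\Rightarrow -A=zBz^{-1}$) for the singularity criterion, the identity $Af_l=2\big(\mathrm{Re}(A)+\mathrm{Re}(B)\big)A+\big(|B|^2-|A|^2\big)$ showing that non-equivalence forces $f_l\neq 0$, and the finite-dimensionality argument giving existence of a solution; these are genuine gaps in the sketch and your proof is the more complete one.
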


\textbf{Sketch of the proof: }Assume that $A$ and $B$ are not equivalent.
This implies that $f_{l}$ and $f_{r}$ are not zero. Then, for the first
equality in (\ref{equation_solution}), we have \textbf{\ }%
\begin{eqnarray*}
C+A^{-1}C\overline{B} &=&Ax+xB+A^{-1}\left( Ax+xB\right) \overline{B} \\
&=&x\left( B+\overline{B}\right) +Ax+A^{-1}x\left\vert B\right\vert ^{2} \\
&=&\left( 2\mathrm{Re}\left( B\right) +A+\left\vert B\right\vert
^{2}A^{-1}\right) x.
\end{eqnarray*}%
Hence $x=f_{l}^{-1}\left( C+A^{-1}C\overline{B}\right) .$ The second
equality is proved in a similar way. \hfill $\square$

%

\theoKernelFormula

\begin{proof} We apply Theorem \ref{JanovskaOpfer} with $z=K_{n}\left(
x,y\right) ,$ $A=x,$ $B=\overline{y},$ and 
$C_{n}=h_n^{-1}(P_{n+1}(x) \overline{P_{n}(y)}
+ P_{n}(x) \overline{P_{n+1}( y)}).$

Using the fact that $x$ and $y$ are purely imaginary, and therefore $\mathrm{Re}(x)=\mathrm{Re}%
(y)=0,$ $\overline{x}=-x$, $\overline{y}= - y$, we calculate

\begin{align*}
f_l &= x + |\overline{y}|^2 x^{-1},\\
f_l^{-1} &= \frac{x}{x^2 - y^2}, \\
K_n(x, y) &= \frac{x}{x^2 - y^2}(C + x^{-1} C y).
\end{align*}

Hence, the solution is 
\begin{equation*}
K_{n}\left( x,y\right) =\frac{1}{x^{2}-y^{2}}\left( xC_{n} + C_{n}y \right) .
\end{equation*}

Next, we substitute the expression for $C_n$ and obtain the equation 
\begin{align*}
K_n(x, y) &= \frac{1}{h_{n}(x^2-y^2)}
\bigg(
x 
\Big[
P_{n+1}(x) \overline{P_n(y)} + P_n(x) \overline{P_{n+1}(y)}
\Big] \\
&+ 
\Big[
P_{n+1}(x) \overline{P_n(y)} + P_n(x) \overline{P_{n+1}(y)}
\Big]
y
\bigg).
\end{align*}

Next, let $x = u s$ and $y = v t$, where $s = |x|$ and $t = |y|$. 

Then, after some calculations we find that for odd $n$

\begin{align*}
K_n(x, y) = \frac{1}{h_n(t^2-s^2)}
\Big(
Q_n(s) Q_{n+1}(t) \big( s - u v t\big) 
- Q_{n+1}(s) Q_n(t) \big( t - u v s\big)
\Big).
\end{align*}%

 For even $n$,
\begin{align*}
K_n(x, y) = \frac{1}{h_n(t^2-s^2)}
\Big(
Q_n(s) Q_{n+1}(t) \big( t - u v s\big) 
- Q_{n+1}(s) Q_n(t) \big( s - u v t\big)
\Big).
\end{align*}
The statement of the theorem follows after a term rearrangement.
\end{proof}

\begin{figure}[tbp]
\includegraphics[width=\textwidth]{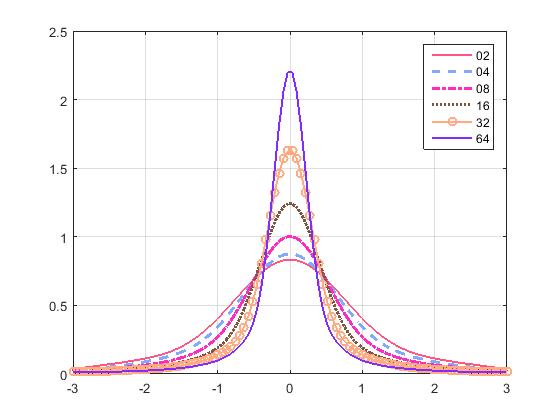}
\caption{Plot of $\frac{1}{n + 1}K_n(i s, i s) e^{-s^2/2} $ for $n = 2, 4,
\ldots, 64$.}
\label{figure_correlation1st}
\end{figure}

From this expression for the kernel, we can derive formulae for the first and second correlation functions. 

\begin{coro} 
Let $x = u s$ and $y = v t$, where $s = |x|$ and $t = |y|$. Then, 
\begin{enumerate}
\item The first correlation function (i.e., the density of the point field with respect to the background measure) is 
\[
p_n^{(1)}(x) = \rho_n(s) := \frac{1}{h_n} \big[ Q_n(s)Q_{n+1}^{\prime }(s) 
- Q_{n+1}(s) Q_n^{\prime }(s) \big].
\]
\item The second correlation function is 
\[
p_n^{(2)}(x, y) = \rho_n(s) \rho_n(t) 
- \Big[ \rho_n(s,t)^2 \frac{1 + \cos \alpha}{2} 
+ \delta_n(s,t)^2 \frac{1 - \cos \alpha}{2} \Big], 
\]
where $\alpha$ is the angle between vectors $x$ and $y$.
\end{enumerate}
\end{coro}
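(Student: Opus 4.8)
The plan is to compute both correlation functions directly from the defining relation $p_n^{(k)} = \mathrm{Det}_M[K_n(x_i,x_j)]$, feeding in the explicit expression for the kernel (Theorem~\ref{theoKernelFormula}) and exploiting the self-duality $K_n(w,z)=\ovln{K_n(z,w)}$ to reduce each quaternion determinant to a scalar.

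\emph{Part (i).} The first correlation function is the $1\times 1$ Dyson--Moore determinant $\mathrm{Det}_M[K_n(x,x)]$. Self-duality gives $K_n(x,x)=\ovln{K_n(x,x)}$, so $K_n(x,x)$ is a real quaternion and the determinant is just $K_n(x,x)$. I would evaluate it by setting $v=u$ and letting $t\to s$ in the kernel formula. Since $u$ is a pure unit quaternion, $u^2=-1$, hence $uv=u^2=-1$; thus $\tfrac{1-uv}{2}=1$ while $\tfrac{1+uv}{2}=0$, so only the $\rho_n$-term survives (the $\delta_n$ term, though finite for $s>0$, is killed). What remains is the diagonal limit $\lim_{t\to s}\rho_n(s,t)$, a $0/0$ indeterminacy that l'H\^opital's rule in $t$ converts into $h_n^{-1}\big[Q_n(s)Q_{n+1}'(s)-Q_{n+1}(s)Q_n'(s)\big]$, which is the claimed $\rho_n(s)$.

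\emph{Part (ii).} For distinct points the second correlation function is the Dyson--Moore determinant of the self-dual $2\times 2$ matrix with diagonal entries $K_n(x,x)=\rho_n(s)$, $K_n(y,y)=\rho_n(t)$ and off-diagonal entry $K_n(x,y)$; for such a matrix this determinant equals $\rho_n(s)\rho_n(t)-|K_n(x,y)|^2$. Everything therefore reduces to evaluating $|K_n(x,y)|^2=K_n(x,y)\ovln{K_n(x,y)}$. Writing $\rho=\rho_n(s,t)$ and $\delta=\delta_n(s,t)$ (both real) and expanding the product, the real coefficients pull out and give
\[
|K_n(x,y)|^2=\rho^2\Big|\tfrac{1-uv}{2}\Big|^2+\delta^2\Big|\tfrac{1+uv}{2}\Big|^2+2(-1)^n\rho\delta\,\Re\Big[\tfrac{1-uv}{2}\,\ovln{\tfrac{1+uv}{2}}\Big].
\]
The key algebraic input is the identity $uv=-\cos\alpha+(u\times v)$, valid for pure unit quaternions $u,v$ making angle $\alpha$, where the vector part $w:=u\times v$ satisfies $|w|=\sin\alpha$ and $w^2=-\sin^2\alpha$. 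Separating scalar and vector parts then yields $\big|\tfrac{1-uv}{2}\big|^2=\tfrac{1+\cos\alpha}{2}$ and $\big|\tfrac{1+uv}{2}\big|^2=\tfrac{1-\cos\alpha}{2}$, while a short computation shows $\tfrac{1-uv}{2}\,\ovln{\tfrac{1+uv}{2}}=-\tfrac{w}{2}$, a pure vector whose real part vanishes. Hence the cross term drops out, and substituting the two norms produces exactly the bracketed expression in the statement.

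The main obstacle is the vanishing of that cross term: a priori it couples $\rho_n$ and $\delta_n$, and only the quaternionic structure of $\tfrac{1\mp uv}{2}$ forces $\tfrac{1-uv}{2}\,\ovln{\tfrac{1+uv}{2}}$ to be purely vectorial, so that $\Re$ annihilates it. This orthogonality is what lets the two channels $\tfrac{1\pm\cos\alpha}{2}$ decouple and gives the clean product-minus-correction form. Establishing it cleanly, together with verifying that the $2\times 2$ Dyson--Moore determinant really collapses to $ad-|b|^2$ for self-dual entries, is where the genuine content lies; the remaining manipulations are bookkeeping.
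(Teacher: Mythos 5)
Your proposal is correct, and it is exactly the derivation the paper leaves implicit: plug the explicit kernel of Theorem~\ref{theoKernelFormula} into the $1\times 1$ and $2\times 2$ Dyson--Moore determinants, take the diagonal limit of $\rho_n(s,t)$ for the density, and use $uv=-\cos\alpha+u\times v$ to evaluate $|K_n(x,y)|^2$ (the vanishing of $\Re\big[\tfrac{1-uv}{2}\,\ovln{\tfrac{1+uv}{2}}\big]$ is indeed the one nontrivial point, and your computation of it is right). Your formulas are also consistent with the paper's subsequent $t=s$ specialization $p_n^{(2)}(us,vs)=[\rho_n(s)^2-\delta_n(s)^2]\tfrac{1-\cos\alpha}{2}$.
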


The first correlation function is shown in Figure \ref{figure_correlation1st}.

If $t = s$, then  we obtain the following expression for the kernel:
\begin{align*}
K_n(u s, v s) = \frac{1}{2}\big[\rho_n(s) -\delta_n(s)\big] 
 -\frac{1}{2}\big[\rho_n(s) + \delta_n(s)\big] u v,
\end{align*}
where 
\begin{align}
\label{def_rho_s}
\delta_n(s) &= K_n(u s, - u s) =  \frac{1}{s \, h_n}  Q_n(s)Q_{n+1}(s). 
\end{align}

And in this case, the second correlation function is 
\begin{align*}
p_n^{(2)}(us, vs) & = \det 
\begin{bmatrix}
K_n(us, us) & K_n(us, vs) \\
K_n(vs, us)  & K_n(vs, vs)
\end{bmatrix}
 \\
& = \big[\rho_n(s)^2 - \delta_n(s)^2 \big]
\frac{1 - \cos \alpha}{2}, 
\end{align*}
where $\alpha$ is the angle between $u$ and $v$. This expression coincide with $\rho_n(s)^2$ (which would be valid for the Poisson random field with density $\rho_n(s)$ only if $\alpha = \pi$.

In addition, as a function of $u$ and $v$, the kernel $K_n(u s, v s)$ has rank 2. Hence, all determinants $\det\Big([K_n(u_i s, u_j s)]_{i, j = 1}^{r}\Big)$ are zero for $r \geq 3$. It follows that all correlation functions of order higher than two vanish. 

This means that it is not possible to have more than two particles on any sphere, and by continuity of correlation functions, it is not likely to have three or more points at approximately the same distance from the origin. 

This property is similar to the Pauli exclusion principle in quantum mechanics, which states that no two fermion particles can be in an identical state. In particular, each state in an atom can hold only two electrons that differ by their spins. 

As another remark, note that this determinantal point field, conditioned to be on the sphere, has the same correlation structure as the spherical determinantal field with two points. This field (in general with $N$ points) was discovered by   Caillol \cite{caillol81}  and rediscovered recently by Krishnapur \cite{krishnapur08} in his extensive study of determinantal point fields.

For the collinear and anti-collinear points ($u = \pm v$) we have 
\bal{
p_n^{(2)}(us, ut) & = \rho_n(s) \rho_n(t) - \rho_n(s, t)^2 \\
p_n^{(2)}(us, -ut) & = \rho_n(s) \rho_n(t) - \delta_n(s, t)^2.
}

%
\newpage
\subsection{Rotational Invariance of the Kernel}
 
 Define the following action of $\bH \setminus \{0\}$
on $\Lambda$.
\[
\Ad_q x = q x q^{-1}.
\]
Recall that this action represents three-dimensional rotations. Namely, if 
$x = x_1 \bi + x_2\bj + x_3\bk$, and $q$ is the unit quaternion 
$q = \cos \theta + (q_1 \bi + q_2 \bi + q_3 \bi) \sin \theta$, then 
$\Ad_q x = y_1 \bi + y_2\bj + y_3\bk$, and the vector  $(y_1, y_2,  y_3)$ is the image of the vector $(x_1, x_2,  x_3)$ after a rotation around the axis with
direction vector $(q_1, q_2, q_3)$ by the angle $\theta$.

\begin{coro}
\label{coro_invariance}
 Let $x$ and $y$ be imaginary quaternions. 
 Then the following rotational invariance holds:
\begin{align*}
K_n(\Ad_q x,\Ad_q y) &= K_n(x, y). 
\end{align*}
\end{coro}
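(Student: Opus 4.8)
To prove the stated rotational invariance I would exploit the single structural fact that $\Ad_q\colon w\mapsto qwq^{-1}$ is an \emph{isometric ring automorphism} of $\mathbb{H}$ that fixes $\mathbb{R}$ pointwise. The plan has two stages: first establish the covariance of the building blocks of $K_n$, and then read off the effect of this covariance on the explicit form of the kernel. Since the orthogonal polynomials $P_k$ have real coefficients, and $\Ad_q$ fixes the reals and respects products, one gets termwise $P_k(\Ad_q x)=\Ad_q P_k(x)=qP_k(x)q^{-1}$. For a unit $q$ we have $q^{-1}=\overline q$, so $\overline{\Ad_q w}=\Ad_q\overline w$; combining these two identities gives $P_k(\Ad_q x)\,\overline{P_k(\Ad_q y)}=\Ad_q\!\big(P_k(x)\,\overline{P_k(y)}\big)$. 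Summing over $k$, and pulling the additive map $\Ad_q$ through the sum (the $h_k$ are real), yields the covariance law
\[
K_n(\Ad_q x,\Ad_q y)=\Ad_q K_n(x,y)=q\,K_n(x,y)\,q^{-1}.
\]

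The second stage converts this into the stated identity using Theorem \ref{theoKernelFormula}. Because $\Ad_q$ is an isometry, $|\Ad_q x|=s$ and $|\Ad_q y|=t$, so the real coefficients $\rho_n(s,t)$ and $\delta_n(s,t)$ are left unchanged, and $\Ad_q x=u's$, $\Ad_q y=v't$ with $u'=\Ad_q u$, $v'=\Ad_q v$ unit pure quaternions. Since $u'v'=(\Ad_q u)(\Ad_q v)=\Ad_q(uv)$, expanding the explicit formula shows that the claim is equivalent to
\[
K_n(\Ad_q x,\Ad_q y)-K_n(x,y)=\frac{(-1)^n\delta_n(s,t)-\rho_n(s,t)}{2}\,\big(\Ad_q(uv)-uv\big).
\]
Now $uv$ decomposes as a real part $-\cos\alpha$ (with $\alpha$ the angle between $u$ and $v$) plus an imaginary part equal to the cross product $u\times v$; the automorphism $\Ad_q$ fixes the real part automatically, so the whole question collapses to whether $\Ad_q$ fixes the imaginary part $u\times v$.

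This last point is exactly where the main obstacle lies, and it is a genuine one: $\Ad_q(u\times v)=u\times v$ holds only when $u\times v$ points along the rotation axis of $q$, not for an arbitrary $q$ (for $u=\bi$, $v=\bj$ the vector $u\times v=\bk$ is sent to $-\bj$ by a quarter–turn about the $\bi$–axis). Thus the reduction identifies the covariance law as the sharp pointwise statement, with the scalar coefficients and the real part of $K_n$ invariant while the imaginary part is rotated by $\Ad_q$. The way I would complete the argument at the level where the invariance is actually meaningful is to pass from the kernel to the observables: the correlation functions $R_k=\mathrm{Det}_{M}[K_n(x_i,x_j)]$ are honestly rotation–invariant, because replacing each entry $K_n(x_i,x_j)$ by $\Ad_q K_n(x_i,x_j)$ is a simultaneous conjugation of all entries by the fixed $q$ (which preserves self–duality, as $\overline{q\,\overline{M_{ji}}\,q^{-1}}=q M_{ij}q^{-1}$), and the Dyson--Moore determinant is invariant under such a similarity. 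I would therefore expect the proof to hinge on the covariance lemma together with the conjugation–invariance of $\mathrm{Det}_{M}$, with the imaginary–part computation above being the step that must be handled with care.
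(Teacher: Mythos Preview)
Your analysis is sharper than the paper's, and in fact exposes a genuine issue in the stated Corollary. The paper's entire proof is the single sentence ``This follows directly from Theorem \ref{theoKernelFormula},'' but as you show, the explicit formula only yields the covariance law
\[
K_n(\Ad_q x,\Ad_q y)=\Ad_q K_n(x,y)=q\,K_n(x,y)\,q^{-1},
\]
not literal equality: the scalar coefficients $\rho_n(s,t)$, $\delta_n(s,t)$ and the real part $-\cos\alpha$ of $uv$ are preserved, but the imaginary part $u\times v$ of $uv$ is rotated by $\Ad_q$. Your concrete check with $u=\bi$, $v=\bj$ and a quarter-turn about the $\bi$-axis is decisive (and one can verify it directly on $K_1(x,y)=1-xy/3$: one gets $K_1(\bi,\bj)=1-\bk/3$ versus $K_1(\bi,\bk)=1+\bj/3$). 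So the Corollary, read as an identity of quaternions, is false as stated.

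What you then propose is exactly the right repair: the rotational invariance that matters for the point field is invariance of the correlation functions, and this follows because replacing every entry $K_n(x_i,x_j)$ by $qK_n(x_i,x_j)q^{-1}$ is a simultaneous similarity of the self-dual matrix, under which the Dyson--Moore determinant is unchanged. Your covariance lemma together with this conjugation-invariance of $\mathrm{Det}_M$ gives $R_k(\Ad_q x_1,\dots,\Ad_q x_k)=R_k(x_1,\dots,x_k)$, which is presumably the content the paper intends. In short, the paper's one-line appeal to the kernel formula does not establish the equality as written; your argument identifies the correct statement and supplies its proof.
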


\begin{proof}
This follows directly from Theorem \ref{theoKernelFormula}.  
\end{proof}

%

\subsection{Kernel Asymptotics in the Bulk}

What happens when the number of particles $n$ grows?  

It turns out that almost all particles are contained in the ball of the radius 
$2\sqrt{n}$, and it is convenient to introduce new variables  $\phi$, $\psi$ in the following way, 
\bal{
\phi &= \arccos \Big(\frac{s}{2\sqrt{n+3/2}}\Big), \\
\psi &= \arccos \Big(\frac{t}{2\sqrt{n+3/2}}\Big).
}

In addition, let us introduce the following notation:

\begin{restatable}{equation}{equAC}
\begin{split}
\ac{x} &:=  \sin(2 x) - 2 x, \\
\daleth_n(\phi,\psi) & := \frac{n + 3/2}{2} (\ac{\phi} - \ac{\psi}), \\
\shin_n(\phi,\psi) & = \frac{n + 3/2}{2} (\ac{\phi} + \ac{\psi}).  
\end{split}
\end{restatable}

Due to the formula for the kernel in Theorem \ref{theoKernelFormula}, it is enough to consider the asymptotic behavior of functions $\rho_n(s, t)$ and $\delta_n(s, t)$.

%

\begin{theo}
\label{theo_kernel_asymp}
Let $s, t \in [\eps \sqrt{n}, (2 - \eps) \sqrt{n}]$ and $s\neq t.$ Then,

\begin{align*}
\rho_n(s, t) e^{- \frac{s^2 + t^2}{4}}  
&= \sqrt{\frac{2}{\pi}}\frac{1}{8}\Big(n+\frac{3}{2}\Big)^{-3/2}
\frac{1}{(\cos \phi - \cos \psi) \cos \phi \cos \psi \sqrt{\sin \phi \sin \psi} }
 \\
 \times \frac{1}{2} & \Big\{  \cos (\daleth_n(\varphi,\psi) - \phi ) 
  - \cos(\daleth_n(\phi, \psi) + \psi)   \\
   &- \sin(\shin_n(\phi,\psi) - \psi)    + \sin(\shin_n(\phi,\psi) - \phi) 
+ O_{\eps}\Big( \frac{1}{\sqrt{n}} \Big) \Big\} .
\end{align*}

and
\bal{
\label{formula_kernel_delta} 
\delta_n(s, t) e^{-\frac{s^{2}+t^{2}}{4}} = O_{\eps}\Big( \frac{1}{n} \Big).
}
\end{theo}

Note that while the estimate for $\delta_n(s, t)$ is uniform in $s$ and $t$, so that this part of the kernel becomes negligible as $n$ grows to infinity, the estimate for $\rho_n(s,t)$ is non-uniform. Its main term can become large if $s$ and $t$ are sufficiently close to each other. 

\begin{proof}
We can express $Q$-polynomials in terms of Hermite polynomials by using Theorem \ref{theoHermite} and formula

\equationPQ*
  Then, by using the Plancherel-Rotach asymptotic formulas for
the Hermite polynomials (see Appendix \ref{asymptoticFormulas} and Szego \cite{szego67}, Theorem 8.22.9), we can derive the asymptotic
expressions for the $Q$-polynomials and for the functions $\rho_n(s, t)$ and 
$\delta_n(s,t)$.

For $\delta_n(s,t)$, this calculation leads to the conclusion that $\delta_n(s,t) = O(n^{-1})$. For 
$\rho_n(s,t)$, the situation is more complicated. First, we get 

\begin{equation}
\label{formula_kernel} 
\begin{split}
\rho_n(s, t) e^{-\frac{s^{2}+t^{2}}{4}} =&\sqrt{\frac{2}{\pi }}%
\frac{1}{(s - t) s t}
\bigg[ (\sin \varphi _{n+2}\sin \psi
_{n+1}) ^{-1/2}
  \\
&\times \sin \left( \Big(\frac{n}{2}+\frac{5}{4}\Big) \ac{\phi _{n+2}} +\frac{3\pi }{4}\right)
 \sin \left( \Big( \frac{n}{2}+\frac{3}{4}\Big) \ac{\psi_{n+1}} +\frac{3\pi }{4}\right)    
\\
&-\left( \sin \varphi _{n+1}\sin \psi _{n+2}\right) ^{-1/2}
 \\
&\times \sin\left( \Big(\frac{n}{2} + \frac{5}{4}\Big) \ac{\psi_{n+2}} +\frac{3\pi }{4}\right) \sin \left( \Big( \frac{n}{2}+\frac{3}{4}\Big) \ac{\phi _{n+1}} +\frac{3\pi }{4}\right) \bigg] 
\\
&+O\Big( \frac{1}{\sqrt{n}}\Big),
\end{split}
\end{equation}
where 
\bal{
\phi_{n+1} &= \arccos\Big(\frac{x}{2\sqrt{n + 3/2}}\Big),\\
\phi_{n+2} &= \arccos\Big(\frac{x}{2\sqrt{n + 5/2}}\Big),
}
and similar expressions hold for $\psi_{n+1}$ and $\psi_{n+2}$.

 In order to get a bit simpler expression, we express $\phi_{n+2}$ and $\psi_{n+2}$ in terms of $\phi_{n+1}$ and $\psi_{n+1}$ and expand this expression in powers of $n^{-1}.$ (For parsimony,  we
write $\phi_1$, $\phi_2$, $\psi_1$ and $\psi_2$ for $\phi_{n+1}$,  $\phi_{n+2}$, 
$\psi_{n+1}$  and $\psi_{n+2}$, respectively.) 

\begin{equation*}
\varphi_2 = \varphi_1 + \frac{\cos \varphi_1}{\sin \varphi_1}
\left( \frac{1}{2 n} - \frac{9}{8 n^2} \right) 
- \frac{1}{8} \frac{\cos^3 \varphi_1}{\sin^3 \varphi_1} \frac{1}{n^2} 
+ O\left(\frac{1}{n^3}\right)
\end{equation*}
A similar formula holds for $\psi_2$.

Then,
\begin{equation}
\label{equ_phi2_phi1}
\begin{split}
\ac{\varphi_2} & = \ac{\varphi_1} 
+ 2 [\cos (2 \varphi_1) - 1] \frac{\cos \varphi_1}{\sin \varphi_1} \frac{1}{2 n}  
+ O\left(\frac{1}{n^2}\right)
\\
 & = \ac{\varphi_1} 
 -   \frac{\sin(2 \varphi_1)}{n}  
+ O\left(\frac{1}{n^2}\right),
\end{split}
\end{equation}
and a similar formula holds for $\ac{\psi_2}$.

By using the elementary trigonometric identities \\
 $\sin \alpha \sin \beta = \frac{1}{2}\left(\cos(\alpha - \beta) - \cos(\alpha + \beta) \right]$ and
 $\cos(x+3\pi/2) = \sin x$, 
we re-write several terms in (\ref{formula_kernel}):

\begin{eqnarray*}
& &\sin \left[ \left(\frac{n}{2} + \frac{5}{4} \right) \ac{\varphi_2} + \frac{3 \pi}{4} \right]
\sin \left[ \left(\frac{n}{2} + \frac{3}{4} \right) \ac{\psi_1} + \frac{3 \pi}{4} \right] \\
& = & \frac{1}{2} \left\{ 
\cos \left[ \Big(\frac{n}{2} + \frac{3}{4} \Big) (\ac{\varphi_2} - \ac{\psi_1}) + \frac{1}{2}\ac{\varphi_2} \right] \right.
 \\
& - & \left. \sin \left[ \Big(\frac{n}{2} + \frac{3}{4} \Big) (\ac{\varphi_2} + \ac{\psi_1}) + \frac{1}{2}\ac{\varphi_2}  \right] \right\},
\end{eqnarray*}

and 

\begin{eqnarray*}
& &\sin \left[ \Big(\frac{n}{2} + \frac{3}{4} \Big) \ac{\phi_1} + \frac{3 \pi}{4} \right]
\sin \left[ \Big(\frac{n}{2} + \frac{5}{4} \Big) \ac{\psi_2} + \frac{3 \pi}{4} \right] \\
& = & \frac{1}{2} \left\{ \cos \left[ \Big(\frac{n}{2} + \frac{3}{4} \Big) (\ac{\phi_1} - \ac{\psi_2}) - \frac{1}{2}\ac{\psi_2} \right] \right.
 \\
& - & \left. \sin \left[ \Big(\frac{n}{2} + \frac{3}{4} \Big) (\ac{\phi_1} + \ac{\psi_2}) + \frac{1}{2}\ac{\psi_2} \right] 
\right\}.
\end{eqnarray*}

Next, note that both $(\sin \phi_2 \sin \psi_1)^{-1/2}$ and $(\sin \phi_1 \sin \psi_2)^{-1/2}$ in formula (\ref{formula_kernel}) equal $(\sin \phi_1 \sin \psi_1)^{-1/2} + O(n^{-1})$. Hence, we can combine terms and use (\ref{equ_phi2_phi1}) to obtain
\begin{align*}
 &\cos \Big( \frac{2 n + 3}{4}  (\ac{\phi_2} - \ac{\psi_1}) + \frac{1}{2}\ac{\phi_2} \Big) 
  -  \cos \Big( \frac{2 n + 3}{4} (\ac{\phi_1} - \ac{\psi_2}) - \frac{1}{2}\ac{\psi_2} \Big) \\
  & = \cos \Big (\frac{2n+3}{4}( \ac{\phi_1} - \ac{\psi_1}) - \phi_1 \Big) 
  -   \cos \Big(\frac{2n+3}{4}( \ac{\phi_1} - \ac{\psi_1}) + \psi_1 \Big) + O\Big(\frac{1}{n}\Big), 
\end{align*}

and 

\begin{align*}
 &\sin \Big( \frac{2 n + 3}{4}  (\ac{\phi_1} + \ac{\psi_2}) + \frac{1}{2}\ac{\psi_2} \Big) 
  -  \sin \Big(\frac{2 n + 3}{4} (\ac{\phi_2} + \ac{\psi_1}) + \frac{1}{2}\ac{\phi_2}) \\
  & = \sin \Big(\frac{2n+3}{4}(\ac{\phi_1} + \ac{\psi_1}) - \psi_1 \Big) 
  -   \sin \Big(\frac{2n+3}{4}(\ac{\phi_1} + \ac{\psi_1}) - \phi_1 \Big) + O\Big(\frac{1}{n}\Big). 
\end{align*}

With these modifications, formula (\ref{formula_kernel}) implies the statement of the theorem. 
\end{proof}

\begin{figure}[tbp]
\includegraphics[width=\textwidth]{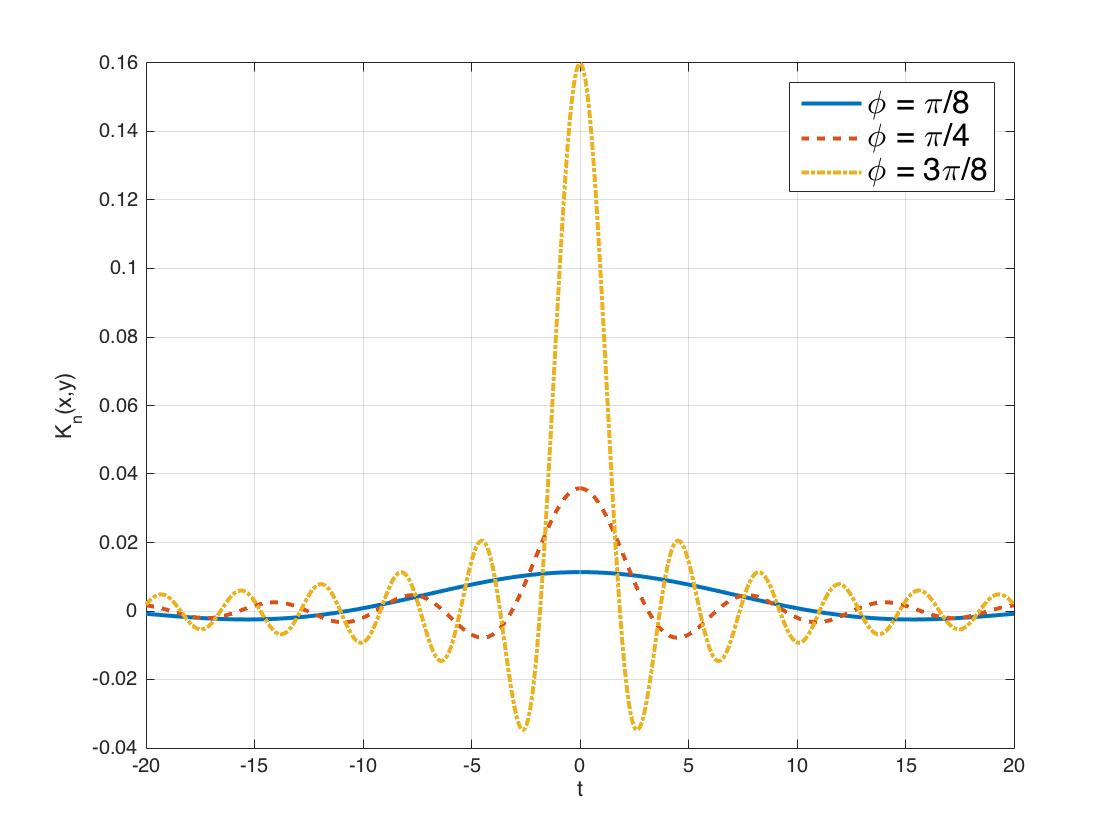}
\caption{A plot of
$\lim_{n \to \infty} \hat{K}_n(x, x + \frac{\tau}{n})$ for various values of parameters
 $\phi = \arccos x$ and $\tau$.}
\label{figure_correlations}
\end{figure}

%

Let 

\defiXPhi*

And let

\defiSnTn*

\begin{theo}
\label{theo_rescaled_rho}
For $x_0 \in [\eps, 1 - \eps]$, and $s_n$, $t_n$ as defined above, we have
\bal{
2\sqrt{n + \frac{3}{2}}\rho_n(s_n, t_n) \sqrt{f(s_n)f(t_n)}
 = \frac{\sin (\tau - \sigma)}{\tau - \sigma}\rho(x_0) + O_{\eps}(n^{-1/2}) .
}
\end{theo}

Here $f(s)$ is the density of the background measure as in (\ref{background_density}).

\begin{proof} 
We use the formula from Theorem \ref{theo_kernel_asymp}. 
 Let us for shortness write $k$ for $\frac{1}{2\sqrt{1 - x_0^2}}$ and note that 
 \bal{
  \sin\Big(\shin_n - \phi - \frac{k\tau}{n}\Big)  
  -  \sin\Big(\shin_n - \phi - \frac{k\sigma}{n}\Big) 
   &= \sin (\shin_n - \phi)\Big(\cos \frac{k\tau}{n} - \cos \frac{k\sigma}{n}\Big)
  \\
  & - \cos (\shin_n - \phi) \Big(\sin \frac{k\tau}{n} - \sin \frac{k\sigma}{n}\Big)
\\
  &= O_{\eps}\big(n^{-1}\big).
  }
Next, we have 
  
 \bal{
 \daleth_n\Big(\phi + \frac{k\sigma}{n}, \phi + \frac{k\tau}{n}\Big) 
 &= \frac{n + \frac{3}{2}}{2}
 \Bigg(\ac{\phi + \frac{k\sigma}{n}} - \ac{\phi + \frac{k\tau}{n}}\Bigg) \\
 &=  \frac{n + \frac{3}{2}}{2}\Big(\sin 2\big(\phi + \frac{k\sigma}{n}\big)- \sin 2\big(\phi + \frac{k\tau}{n}\big) \\
 & + 2\frac{k(\tau - \sigma)}{n}\Big) 
 \\
 &= (1 - \cos 2\phi) k (\tau - \sigma) + O_\eps\big( n^{-1}\big) \\
 &=  \tau - \sigma + O_\eps\big( n^{-1}\big). 
 } 
 
 Hence, 
 \bal{
 &\cos \big(\dalet_n - \phi - \frac{k\sigma}{n} \big) 
 - \cos\big(\daleth_n + \phi + \frac{k\tau}{n}\big) \\
 & =  \cos(\tau - \sigma - \phi) 
- \cos(\tau - \sigma + \phi) + O_\eps(n^{-1}) \\
& = 2 \sin (\tau - \sigma) \sin \phi  + O_\eps(n^{-1}).
 }
 
 Therefore, after some calculation we get from Theorem \ref{theo_kernel_asymp}, 
 
 \bal{
  2 \sqrt{n + \frac{3}{2}}\rho_n(s_n, t_n) \Big(\frac{1}{2\pi }\Big)^{3/2}e^{- \frac{x_n^2 + y_n^2}{4}}  
\\
 =\frac{1}{(2\pi)^2} \frac{\sin (\tau - \sigma)}{\tau - \sigma} \frac{\sin \phi}{\cos^2 \phi} 
   + O_\eps(n^{-1/2}) . 
}

 \end{proof}

\theoDensity

\begin{proof}
This theorem immediately follows from Theorem \ref{theo_rescaled_rho} if we take $\sigma$ and $\tau$ to zero.
\end{proof}

\coroRadialDensity

\begin{proof}
Note that 
\bal{
\pp_n(r) := \int_{r S^2} \rho_n(x) \, d\mu(x) = \int_{S^2} r^2 \rho_n(x) \, d\mu(x).
}
Hence,
\bal{
\frac{1}{2 \sqrt{n}}\pp_n(2 \sqrt{n} s) 
&= \frac{1}{2 \sqrt{n}} \int_{S^2} 4n s^2 \rho_n(2 \sqrt{n} u s) \, d\mu(u) \\
&= 4\pi \times 2 \sqrt{n} \rho_n(2 \sqrt{n} u s) \\
& \to \frac{1}{\pi} \sqrt{1 - s^2}.
} 

\end{proof}

\theoBulkAsymptotic
\begin{proof}
From Theorem \ref{theo_kernel_asymp}, we see that 
$\sqrt{n}\delta_n(s_n, t_n) = O\big(n^{-1/2}\big)$. Then, the claim of the theorem follows by combination of results in Theorems \ref{theoKernelFormula} and \ref{theo_rescaled_rho}. 
\end{proof}

%

\subsection{Kernel Asymptotics at the Center}

Here we study the kernel near the center without rescaling by $\sqrt{n}$.

\theoKernelCenter

\begin{proof}

We use Theorems \ref{theoHermite} and \ref{theoKernelFormula} to get an expression for $\rho_n(s, t)$ in terms of Hermite polynomials. 

Let $n$ be odd, then
\bal{
h_n \rho_n(s, t) &=  \frac{1}{s t} H_{n + 1}(s) H_{n + 1}(t)
 - \frac{1}{s^2 t^2} (n + 1) H_{n}(s) H_{n}(t)  \\
&+ \frac{1}{(t - s) s^2 t} \big[t + (n + 1) s\big] H_{n}(s) H_{n + 1}(t) \\
&- \frac{1}{(t - s) s t^2} \big[t (n + 1) + s\big] H_{n+1}(s) H_{n}(t) \\
}
and a similar formula holds for the even $n$.

Next we use an approximation for Hermite polynomials from Theorem \ref{theoHermiteApproximation} and find that 
\bal{
\rho_n(s, t) e^{-\frac{s^2 + t^2}{4}} 
= \sqrt{\frac{2}{\pi}} \frac{1}{s t}\Big(\frac{\sin\big(\sqrt{n}(t - s)\big)}{(t - s) } + O(n^{-1/2})\Big).
}
The same formula holds for an even $n$.

For $\delta_n(s, t)$ we have the formula: 

\bal{
h_n \delta_n(s, t) &=  \frac{1}{s t} H_{n + 1}(s) H_{n + 1}(t)
 - \frac{1}{s^2 t^2} (n + 1) H_{n}(s) H_{n}(t)  \\
&+ \frac{1}{(t + s) s^2 t} \big[t + (n + 1) s\big] H_{n}(s) H_{n + 1}(t) \\
&- \frac{1}{(t + s) s t^2} \big[t (n + 1) + s\big] H_{n+1}(s) H_{n}(t) 
}
for odd $n$ and a similar formula holds for even $n$. 

By applying approximation from Theorem \ref{theoHermiteApproximation}, we find that 
\bal{
\delta_n(s, t) e^{-\frac{s^2 + t^2}{4}} 
= \sqrt{\frac{2}{\pi}} \frac{1}{s t}\Big(\frac{\sin\big(\sqrt{n}(t + s)\big)}{(t + s) } + O(n^{-1/2})\Big).
}
\end{proof}

 For the density we have 
 
\bal{
\rho_n(s) e^{-\frac{s^2}{2}} 
= \sqrt{\frac{2}{\pi}} \frac{1}{s^2} \Big(\sqrt{n} + O(n^{-1/2})\Big).
}

%

\section{Open Problems}
\label{sectionProblems}
Several problems seems to be interesting to investigate.

\begin{enumerate}
\item What is the size of holes in this field? That is, if we select a point 
$x \in \R^3$, then how far is the closest field point?

\item It would be interesting to see if this field can be realized as a field of eigenvalues of some quaternion matrices. 

\item Is there a random dynamical system, for which the field distribution is an equilibrium distribution? Here we have in mind a system which would resemble the Dyson Brownian motion.

\item Is it possible to adapt the constructions in this paper to define a random field on $R^4$?
\end{enumerate}

%
\newpage
\appendix

\section{Useful asymptotic formulas}
\label{asymptoticFormulas}

The following is a modification of the standard Plancherel - Rotach for the version of Hermite polynomials that we use in this paper. 

\begin{theo}[Plancherel - Rotach]
Let $\epsilon$ and $\omega$ be fixed positive numbers. We have \\
(a) for $x = 2\sqrt{n + \frac{1}{2}} \cos \phi$, $\epsilon \leq \phi \leq \pi - \epsilon$, 
\begin{align*}
e^{-x^2/4} H_n(x) &= \Big(\frac{2}{\pi n}\Big)^{\frac{1}{4}}\sqrt{n!} \frac{1}{\sqrt{\sin \phi}} \\
&\cdot\bigg\{ \sin \Big[ \big(\frac{2n + 1}{4}\big)\big( \sin\, 2\phi - 2 \phi \big) + \frac{3 \pi}{4}\Big] + O(n^{-1})\bigg\}.
\end{align*}
(b) for $x =  2\sqrt{n + \frac{1}{2}} - \big(\frac{1}{9 n}\big)^{1/6} t$, $t$ complex and bounded,
\begin{align*}
e^{-x^2/4} H_n(x) = \Big(\frac{8\cdot 81}{\pi^9 n}\Big)^{\frac{1}{12}}\sqrt{n!} \Big\{ A(t) + O(n^{-2/3}) \Big\},
\end{align*}
where $A(t)$ is the Airy function.
\end{theo}

\begin{theo}
\label{theoHermiteApproximation}
For real $s$ and odd $n$, 
\bal{
H_n(s) &= \frac{ \Gamma\Big(\frac{n}{2} + 1\Big)}{\sqrt{\pi\big(n + \frac{1}{2}\big)}} 
2^{\frac{n + 1}{2}} (-1)^{\frac{n - 1}{2}} 
\\
&\times\Bigg(\sin\bigg(s\sqrt{n+\frac{1}{2}}\bigg) + 
\frac{s^3}{24\sqrt{n+\frac{1}{2}}}\cos\bigg(s\sqrt{n+\frac{1}{2}}\bigg)\Bigg) + O(n^{-1}).
}
For real $s$ and even $n$, 
\bal{
H_n(s) &= \frac{ \Gamma\Big(\frac{n+1}{2}\Big)}{\sqrt{\pi}} 
2^{\frac{n}{2}} (-1)^{\frac{n}{2}} 
\\
&\times\Bigg(\cos\bigg(s\sqrt{n+\frac{1}{2}}\bigg) + 
\frac{s^3}{24\sqrt{n+\frac{1}{2}}}\sin\bigg(s\sqrt{n+\frac{1}{2}}\bigg)\Bigg)+ O(n^{-1}).
}
The bound for the error term holds uniformly in every finite real interval. whether it contains the origin or not.
\end {theo}

\section{Proof of Theorem  \ref{propo_scalar_product}}
\label{sectionPropoScalar}

We start with a useful lemma.

\begin{lemma}
For every non-negative integers $l$ and $k,$ \\
the integral  $\int_{\Lambda}\left\vert z\right\vert ^{2l}z^{k}d\mu \left( z\right) $ is real.
\end{lemma}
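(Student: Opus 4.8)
The plan is to exploit the fundamental identity for pure quaternions, namely that $z^2 = -|z|^2$ for every $z \in \Lambda$. Writing $z = x_1 \mathbf{i} + x_2 \mathbf{j} + x_3 \mathbf{k}$ and using $\mathbf{i}^2 = \mathbf{j}^2 = \mathbf{k}^2 = -1$ together with the anticommutation relations $\mathbf{i}\mathbf{j} = -\mathbf{j}\mathbf{i}$, etc., the cross terms cancel and one finds $z^2 = -(x_1^2 + x_2^2 + x_3^2) = -|z|^2$, a nonpositive real scalar. Consequently the powers of $z$ collapse: $z^{2m} = (-1)^m |z|^{2m}$ is a real scalar, while $z^{2m+1} = (-1)^m |z|^{2m}\, z$ is a real multiple of the pure quaternion $z$. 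This reduction is the whole engine of the argument.

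I would then split into two cases according to the parity of $k$. If $k = 2m$ is even, then $|z|^{2l} z^k = (-1)^m |z|^{2l+2m}$ is a real-valued function on $\Lambda$, and since $d\mu$ is a positive real measure, the integral is manifestly real.

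If $k = 2m+1$ is odd, then $|z|^{2l} z^k = (-1)^m |z|^{2l+2m}\, z$, so the integral equals $(-1)^m \int_\Lambda |z|^{2l+2m}\, z \, d\mu(z)$. Here the scalar factor $|z|^{2l+2m} = (x_1^2 + x_2^2 + x_3^2)^{l+m}$ is even in each coordinate $x_j$, and the Gaussian density $f(z) = (2\pi)^{-3/2} e^{-|z|^2/2}$ is likewise invariant under each reflection $x_j \mapsto -x_j$. Since $z = x_1 \mathbf{i} + x_2 \mathbf{j} + x_3 \mathbf{k}$ is odd under these reflections, each of the three component integrals $\int_\Lambda |z|^{2l+2m}\, x_j \, d\mu(z)$ vanishes by the standard odd-integrand argument. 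Hence the whole integral is $0$, which is in particular real.

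I do not anticipate a genuine obstacle here; the only point needing explicit verification is the identity $z^2 = -|z|^2$, after which both cases are routine. The symmetry argument in the odd case is the one step worth spelling out, since it is precisely what forces the imaginary (vector) part of the integrand to average out against the rotation-invariant Gaussian weight.
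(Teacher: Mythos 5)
Your proof is correct, but it takes a genuinely different (and tidier) route than the paper. The paper expands $z^{k}$ as a sum of monomials in $x_{1},x_{2},x_{3}$ multiplied by products of the imaginary units, and argues combinatorially that every monomial carrying a nonzero imaginary part must contain some variable to an odd power, so that its integral vanishes by the symmetry of $\mu$. You instead collapse the powers of $z$ at the outset via the identity $z^{2}=-\vert z\vert^{2}$, obtaining $z^{2m}=(-1)^{m}\vert z\vert^{2m}$ and $z^{2m+1}=(-1)^{m}\vert z\vert^{2m}z$, which reduces the even case to a manifestly real integrand and the odd case to three scalar integrals of functions odd in one coordinate. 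What your approach buys: it bypasses the monomial bookkeeping entirely, and it actually proves more --- the exact formulas for $z^{k}$ make the paper's subsequent lemma (computing $\Re z^{k}$ via the $2\times 2$ matrix representation and its eigenvalues $\pm i\vert z\vert$) an immediate corollary, whereas the paper has to prove that separately. What the paper's approach buys is mainly that it does not presuppose the algebraic identity $z^{2}=-\vert z\vert^{2}$, though as you note that identity is a one-line computation from the anticommutation relations. Both arguments rest on the same final ingredient, the reflection symmetry of the Gaussian measure.
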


\begin{proof} We write $z=x_{1}\mathbf{i}+x_{2}\mathbf{j}+x_{3}\mathbf{k}$
and expand the expression $\left\vert z\right\vert ^{2l}z^{k}.$ We claim
that any monomial before an imaginary unit has one of the variables $x_{i}$
in the odd power.

Indeed, it is sufficient to prove this claim for $l=0,$ since $\left\vert
z\right\vert ^{2l}$ is real and all monomials in its expansion have variables in the even power. 

Consider a single term in the expansion of $z^{k},$
for example, $x_{1}\mathbf{i}x_{3}\mathbf{k}x_{2}\mathbf{j}x_{2}\mathbf{%
j\ldots ,}$ It can be either imaginary or real, and it is clear that it is imaginary if and only if the term contain at least one of the
variables in the odd power. Indeed, we can do transpositions of imaginary units in the expansion and this will only introduce real factors. Hence, if all powers are even then all imaginary units in the product can be paired off and cancelled out, so that the product is real.  

Next, note that if any of the variables enters a monomial in the odd
power, then the integral of this monomial with respect to measure $\mu $ is $0,$ by the symmetry of $\mu .$ This concludes the proof of the lemma. 
\end{proof}

Now, let us calculate the real part of the expression $\left\vert
z\right\vert ^{2l}z^{k}.$ Note that $\left\vert z\right\vert ^{2l} = \left( x_{1}^{2}+x_{2}^{2}+x_{3}^{2}\right) ^{l}.$
Hence we only need to calculate the real part of $z^{k}.$ This is done in
the following lemma.

\begin{lemma}
Let $z=x_{1}\mathbf{i}+x_{2}\mathbf{j}+x_{3}\mathbf{k.}$ Then, 
\begin{equation*}
\mathrm{Re}z^{k}=\mathrm{Re}\overline{z}^{k}=\left\{ 
\begin{array}{cc}
\left( -1\right) ^{r}\left( x_{1}^{2}+x_{2}^{2}+x_{3}^{2}\right) ^{r}, & 
\text{if }k=2r, \\ 
0, & \text{if }k\text{ is odd.}%
\end{array}%
\right.
\end{equation*}
\end{lemma}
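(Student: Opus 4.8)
The plan is to reduce the entire statement to the single identity that any pure quaternion squares to a negative real scalar, namely $z^2 = -(x_1^2+x_2^2+x_3^2)$. First I would compute $z^2$ directly by expanding $(x_1\bi + x_2\bj + x_3\bk)^2$ and invoking the multiplication rules $\bi^2 = \bj^2 = \bk^2 = -1$ together with the anticommutativity relations $\bi\bj = -\bj\bi$, $\bj\bk = -\bk\bj$, $\bk\bi = -\bi\bk$. The three diagonal terms contribute $-(x_1^2+x_2^2+x_3^2)$, while each off-diagonal pair, for instance $x_1 x_2(\bi\bj + \bj\bi)$, vanishes because the distinct imaginary units anticommute. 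Writing $s = |z| = \sqrt{x_1^2+x_2^2+x_3^2}$, this yields $z^2 = -s^2$, a real scalar.

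With this identity in hand the two cases follow immediately by taking powers. For even $k = 2r$, I would write $z^{2r} = (z^2)^r = (-s^2)^r = (-1)^r (x_1^2+x_2^2+x_3^2)^r$, which is already a real number and hence equals its own real part. For odd $k = 2r+1$, I would write $z^{2r+1} = z\cdot z^{2r} = (-1)^r s^{2r}\, z$; since $z$ is a pure quaternion and the prefactor is a real scalar, the product is again a pure quaternion and therefore has vanishing real part.

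For the conjugate, I would use that $\overline{z} = -z$ for any pure quaternion, so $\overline{z}^{\,k} = (-1)^k z^k$. When $k$ is even this equals $z^k$ outright, and when $k$ is odd both $z^k$ and $\overline{z}^{\,k}$ are pure quaternions with zero real part; in either case $\Re\overline{z}^{\,k} = \Re z^k$, which establishes the asserted equality of the two real parts.

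There is essentially no obstacle here beyond the opening computation of $z^2$; once $z^2 = -s^2$ is verified, everything else is a one-line consequence of splitting $k$ into its parity classes. The only point that genuinely requires care is applying the anticommutation (rather than commutation) rules correctly, so that the mixed products such as $\bi\bj + \bj\bi$ cancel to zero rather than reinforce.
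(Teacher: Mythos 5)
Your proof is correct, but it takes a different route from the paper. You reduce everything to the single identity $z^2 = -(x_1^2+x_2^2+x_3^2)$, verified by direct expansion with the anticommutation rules, and then read off both parities by taking powers; the conjugate case follows from $\overline{z} = -z$. The paper instead passes to the $2\times 2$ complex matrix representation $\varphi(z)$, uses $\Re w = \frac{1}{2}\Tr\varphi(w)$, and computes the trace of $\varphi(z)^k$ from the eigenvalues $\pm i\sqrt{x_1^2+x_2^2+x_3^2}$. Your argument is more elementary and self-contained, and it actually delivers slightly more than the statement asks for: it identifies $z^k$ completely (a real scalar for even $k$, a real multiple of the pure quaternion $z$ for odd $k$), which is exactly the observation the paper relegates to a remark after the lemma. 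The paper's eigenvalue argument buys generality --- the same trace computation would handle $\Re f(z)$ for any polynomial or power series $f$ without tracking parities by hand --- but for this specific statement your direct computation is the shorter and cleaner path. The one step that genuinely needed care, the cancellation of the mixed terms $x_ix_j(\mathbf{e}_i\mathbf{e}_j + \mathbf{e}_j\mathbf{e}_i)$, you handle correctly.
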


Remark: Since $|z^k|^2 = \left( x_{1}^{2}+x_{2}^{2}+x_{3}^{2}\right) ^{k}$,
this result implies that for an even $k$, we have $\mathrm{Re}z^{k}=|z^k|$ and
therefore $z^k$ is real. For an odd $k$, $z^k$ is imaginary. This is similar to
the situation for complex numbers.

\begin{proof} We write the quaternion $z$ in its matrix form:%
\begin{equation*}
\varphi (z\mathbf{)}=\left( 
\begin{array}{cc}
x_{1}i & x_{2}+x_{3}i \\ 
-x_{2}+x_{3}i & -x_{1}i%
\end{array}%
\right) ,
\end{equation*}%
and note that for every quaternion $w$ its real part can be computed as $%
\frac{1}{2}\mathrm{Tr}\varphi (w\mathbf{).}$ The eigenvalues of $\varphi (z%
\mathbf{)}$ are $\pm i\sqrt{x_{1}^{2}+x_{2}^{2}+x_{3}^{2}}.$ Hence, we
compute: 
\begin{equation*}
\mathrm{Re}z^{k}=\frac{1}{2}\mathrm{Tr}\varphi (z^{k})=\left\{ 
\begin{array}{cc}
\left( -1\right) ^{r}\left( x_{1}^{2}+x_{2}^{2}+x_{3}^{2}\right) ^{l}, & 
\text{if }k=2l, \\ 
0, & \text{if }k\text{ is odd.}%
\end{array}%
\right.
\end{equation*}%
The case of $\mathrm{Re}\overline{z}^{k}$ is similar.
\end{proof}

Now we can finish the proof of Theorem \ref{propo_scalar_product}.

Let $m\leq n$ and note that 
\begin{equation*}
\Re (\ovln{z}^m z^n)=|z|^{2m} \Re z^{n-m} = 
\begin{cases}
(-1)^{\frac{n-m}{2}}(x_1^2+x_2^2+x_3^2)^{\frac{m+n}{2}},
& \text{ if } n - m \text{ is even,} 
\\
0, & \text{ otherwise.}
\end{cases}
\end{equation*}

Next we calculate:%
\begin{eqnarray*}
\int_{\mathbb{R}^{3}}\left( x_{1}^{2}+x_{2}^{2}+x_{3}^{2}\right) ^{l}d\mu
(z) &=&\frac{1}{\left( 2\pi \right) ^{3/2}}\int_{\mathbb{S}^{2}}\int_{%
\mathbb{R}}r^{2l+2}e^{-r^{2}/2}drdS \\
&=&2\frac{1}{\sqrt{2\pi }}\int_{\mathbb{R}}r^{2l+2}e^{-r^{2}/2}dr \\
&=&\frac{2^{l+1}}{\sqrt{\pi }}\int_{\mathbb{R}}t^{l+1/2}e^{-t}dt \\
&=&\frac{2^{l+1}}{\sqrt{\pi }}\Gamma \left( l+\frac{3}{2}\right) .
\end{eqnarray*}

Hence, 
\begin{equation*}
\int \overline{z}^{m}z^{n}d\mu (z)=\left( -1\right) ^{\frac{n-m}{2}}\frac{2^{%
\frac{m+n}{2}+1}}{\sqrt{\pi }}\Gamma \left( \frac{m+n+3}{2}\right) =\left(
-1\right) ^{\frac{n-m}{2}}(m + n + 1)!!
\end{equation*}

\hfill $\square $

\bibliographystyle{plain}
\bibliography{comtest}

\end{document}